\documentclass{article}
\usepackage{hyperref}
\usepackage{enumerate}
\usepackage{cite}
\usepackage{amsfonts}
\usepackage{amsthm}
\usepackage{amsmath,mathtools}
\usepackage{amssymb}
\usepackage{caption}
\usepackage{comment}
\usepackage{subcaption}
\usepackage{xcolor}
\usepackage{soul} 
\usepackage{subcaption}
\usepackage{graphicx}
\usepackage{tikz}
\usepackage{pgfplots}
\usepackage{pgfplotstable}
\usepackage{tkz-fct}
\usetikzlibrary{pgfplots.polar}
\pgfplotsset{compat=newest}
\usepgfplotslibrary{fillbetween}
\usetikzlibrary{patterns}
\usepackage{mathrsfs}
\usepackage{authblk}
\usepackage[utf8]{inputenc}
\usepackage{array}
\usepackage{pgffor}
\usepackage{float}

\usepackage{multirow}
\usepackage[top=1in, bottom=1in, left=1in, right=1in]{geometry}
\hypersetup{colorlinks, linkcolor=red, filecolor=darkgreen, urlcolor=blue, citecolor=blue}
\theoremstyle{plain}
\newtheorem{theorem}{Theorem}[section]

\theoremstyle{definition}
\newtheorem{definition}{Definition}[section]

\numberwithin{equation}{section}

\definecolor{pomcol}{rgb}{1,0,0}
\definecolor{dbcol}{rgb}{0,0,0.8}

\title{Fractal-Fractional HIV Dynamics with Mittag-Leffler Kernel: Analysis, Stability, and Numerical Simulations}
 \author{Niaz Ali Shah$^{1}$, Samad Noeiaghdam$^{2,3,*}$\\
 \small $^{1}$Department of Mathematics Abbottabad University of Science and Technology, Abbottabad, 22010 Pakistan. nuzhatniaz2014@gmail.com  \\
 \small $^{2}$Institute of Mathematics, Henan Academy of Sciences, Zhengzhou 450046, China. snoei@hnas.ac.cn\\
 \small $^{3}$Department of Mathematical Sciences, Saveetha School of Engineering, SIMATS, Chennai, 602105, India.\\
}
\date{}
\begin{document}
\maketitle

\begin{abstract}

In this paper, a fractal--fractional HIV model with the Mittag--Leffler kernel is proposed using the Atangana--Baleanu--Caputo operator to capture the memory and hereditary properties of the disease dynamics. The existence and uniqueness of the solutions are investigated using suitable analytical techniques, and the Hyers--Ulam stability analysis is carried out to verify the stability behavior of the proposed system. For the numerical simulations, the Newton polynomial approximation method together with the Atangana--Toufik numerical scheme is employed to obtain approximate solutions for different parameter settings. Furthermore, several visualization techniques, including sensitivity heatmap representation and tornado diagram analysis, are utilized to study the influence of model parameters on the HIV dynamics. The obtained numerical results demonstrate that the proposed fractal--fractional framework provides an effective and reliable approach for analyzing the transient and long-term behavior of HIV transmission dynamics.

 \vspace{.5cm}{\it keywords:  HIV dynamical model; Fractional order; Atangana-Baleanu-Caputo operator; Sensitivity; }


\end{abstract}


\section{Introduction}\label{sec1}


Despite major advances in prevention, diagnosis, and antiretroviral therapy, HIV still one of the world’s most important public health isuse. According to recent global estimates, tens of millions of people are affected with HIV, with the highest burden intense in un- developed countries, particularly in sub-Saharan Africa. Over the past two decades, expanded access to treatment has substantially reduced AIDS-related deaths and improved life expectancy, transforming HIV from a fatal disease into a manageable chronic condition for many patients \cite{s1}. However, new infections continue to occur each year due to limited healthcare access, social stigma, inequality, lack of education, and insufficient preventive measures in vulnerable populations. International organizations and governments continue to emphasize early diagnosis, universal treatment access, public awareness, vaccination research, and the development of long-acting therapies as essential strategies for achieving to vanished such dangerous epidemic disease global is still a major problem in the coming decades \cite{s2}.


HIV is a retrovirus that cannot replicate independently, but it requires the host cells to do so,
reproduce independently. It transports single-stranded RNA, which is transformed.
the virus reverse transcriptase enzyme after into double-stranded DNA.
enters a CD4$^{+}$ T-cell \cite{h1}. The viral DNA becomes incorporated in the host genome,
enabling the infected cell to produce viral RNA and proteins. These components
gather at the cell membrane to create immature virions that bud off the cell.
and grow up by cleavage by protease, and become complete infectious.
particles \cite{h3}. HIV infection can be long term without any clinical manifestations.
period, and symptoms usually develop when the $CD_4^{+}$ T-cell number is decreased.
to almost $200 cells/mm^{3}$ and the viral load grows significantly \cite{h2}. Mathematical modeling has played a big role in comprehending HIV.
pathogenesis and immunodynamics. Wodarz and Nowak \cite{h2} came up with.
model systems of viral development and response to therapy,
and as Mbogo et al. \cite{h1} included stochastic effects and therapeutic.
interventions on in-host HIV dynamics. Arruda et al. \cite{h4} also studied.
In spite of the fact that CD8+ T-cells are known as major agents of the eradication of infected cells and inhibition of viral replication, their numerical effect- especially in late infection- stages remains insufficiently explored. This work is based on these studies and motivated to a mathematical model to explore further that how 
CD8$^{+}$ T-cells controlling HIV dynamics.



Dynamical systems play a fundamental role in bio-mathematics for understanding the evolution, transmission, and control of infectious diseases and biological processes in both integer-order and fractional-order frameworks. Integer-order models are widely used to describe classical biological dynamics, while fractional-order systems provide more realistic descriptions by incorporating memory and hereditary effects, which are essential in many epidemiological and physiological phenomena. Recent studies have demonstrated the effectiveness of these approaches in modeling malaria infection, COVID-19 transmission, and other biomedical processes. In particular, Noeiaghdam and Micula developed efficient strategies to improve the accuracy of nonlinear malaria infection models and fractional COVID-19 systems using stochastic arithmetic techniques and the CADNA library \cite{s4,s6}. Comparative analyses between floating-point arithmetic and discrete stochastic arithmetic for fractional malaria models were investigated in \cite{s5}, while wavelet-based numerical procedures for fractional COVID-19 outbreak models were proposed in \cite{s7}. Furthermore, modified SIR and fractional differential equation models have been successfully applied to study COVID-19 transmission dynamics and stability analysis in different countries and environments \cite{s8,s9,s10}. Advanced fuzzy and subdivision-based approaches have also contributed to infection growth analysis and biomedical data interpretation \cite{s11}.  Zhou \textit{et al.} \cite{s20} proposed a differential equation model incorporating a cure rate for infected CD4$^{+}$ T-cells to investigate treatment effects on HIV dynamics. The global stability of an HIV-1 infection model including an eclipse stage of infected cells, which improved the biological realism of the model  analyzed by Buonomo and Vargas-De-León \cite{s21}. Delayed HIV infection systems with nonlinear incidence rates were studied by Cai \textit{et al.} \cite{s22}, where the authors established important global stability results. Furthermore, Xu \cite{s23} investigated an HIV-1 model with saturation infection and intracellular delay, demonstrating how time-delay effects influence the stability and persistence of the infection. Also more studies can be foud in \cite{s24}. These studies highlight the significance of mathematical modeling in providing theoretical insight into HIV dynamics and developing effective strategies for disease control and treatment.




In this paper, we develop a fractal--fractional HIV dynamics by employing the Atangana-Baleanu-Caputo operator with the Mittag-Leffler kernel, which enables the incorporation of memory effects and fractal characteristics into the biological system. The proposed formulation provides a more realistic framework for describing the complex transmission, and progression behavior of HIV infection. We investigate the basic mathematical properties of the model by through suitable fixed-point approaches and functional analysis techniques. In addition, the Hyers--Ulam stability analysis is carried out to verify the stability and robustness of the obtained solutions under small perturbations. For the numerical investigations, Newton polynomial approximation techniques are applied to compute approximate solutions corresponding to different kernel functions and parameter settings. Furthermore, a comprehensive visualization methodology is implemented to analyze the qualitative and quantitative behavior of the model. The simulations are performed using the fractal-fractional HIV model with the Atangana-Baleanu-Caputo operator involving the Mittag-Leffler kernel and solved numerically through the Atangana-Toufik numerical scheme. The obtained results reveal that variations in key epidemiological parameters significantly influence both the transient and long-term dynamics of the HIV system. To provide deeper insight into the parameter effects, a sensitivity heatmap representation is constructed to visualize the influence of parameters on the system behavior, while tornado diagram analysis is employed to identify the most dominant and sensitive parameters affecting the HIV transmission dynamics.


\section{Model Description}\label{sec2}


The proposed model populations are divided into five compartments showing the main components of HIV dynamics: healthy CD4$^{+}$ T-cells ($T$), infected CD4$^{+}$ T-cells ($I$), free HIV virions ($V$), immune response cells ($Z$), corresponding to CD8$^{+}$ T-cells, and activated immune cells ($Z_a$) as 
\begin{equation} \label{2-1}
\begin{cases}
\dfrac{dT}{dt} = \lambda_T - \mu_T T - \chi T V, \\[6pt]
\dfrac{dI}{dt} = \chi T V - \mu_I I - \alpha I Z_a, \\[6pt]
\dfrac{dV}{dt} = \varepsilon_V \mu_I I - \mu_V V, \\[6pt]
\dfrac{dZ}{dt} = \lambda_Z - \mu_Z Z - \beta Z I, \\[6pt]
\dfrac{dZ_a}{dt} = \beta Z I - \mu_{Z_a} Z_a .
\end{cases}
\end{equation}
The model parameters are as follows: the natural mortality rate of healthy CD4\(^+\) T-cells, denoted \(\mu_T\), the natural death rate of infected CD4\(^+\) T-cells, also denoted \(\mu_T\), the infection transmission rate of CD4\(^+\) T-cells upon contact with free virions, \(\lambda\). The clearance rate of free infectious virions, \(\mu_V\). The constant recruitment rate of CD8\(^+\) T-cells into the system, \(\lambda_Z\), the removal rate of infected CD4\(^+\) T-cells mediated by activated CD8\(^+\) T-cells, \(\alpha\),and the natural mortality rate of CD8\(^+\) T-cells, denoted \(\mu_Z\). The natural decay rate of activated CD8\(^+\) (immune response) cells, also indicated by \(\mu_Z\), and the activation rate of CD8\(^+\) T-cells stimulated by the presence of infected CD4\(^+\) T-cells, \(\beta\). The average production rate of HIV virions by each infected CD4\(^+\) T-cell, \(c_V\).
\section{Preliminaries}\label{sec3}
\begin{definition}\label{def2-1}
To define the fractal-fractional derivative of order $\nu_1$ in the Riemann-Liouville terms of functions $\beta(\tilde{t})$ that are not essential differentiable. Where $(\gamma)$ fractional dimension.
Then for $0 \leq
\nu_1,~ \eta \leq 1$, .
\begin{itemize}
    \item the power law kernel is defined as:
\begin{equation}\label{eq2-1}
    _0^{FFP}D_t^{\nu_1, \gamma}  \Phi (\tilde{t}) = \frac{1}{\Gamma (n-\nu_1)} \frac{d^n}{dt^n} \int_0^{\tilde{t}} (t-\zeta_1)^{n-\nu_1-1} \Phi (\zeta_1) d\zeta_1
\end{equation}
where $n-1 < \nu_1,~ \gamma <n \in N$. And 
\begin{equation}\label{eq2-2}
\frac{D \Phi(\zeta_1)}{D \zeta_1^{\eta}} = \lim_{t \rightarrow \zeta_1} \frac{\psi(\tilde{t}) - \Phi(\zeta_1)}{ t^{\eta} - \zeta_1^{\eta}}   
\end{equation}
\item the exponential decay kernel is obtained as:
\begin{equation}\label{eq2-3}
    _0^{FFE}D_t^{\nu_1, \gamma}  \Phi (\tilde{t}) = \frac{M (\nu_1)}{\Gamma (n-\nu_1)} \frac{d^n}{dt^n} \int_0^{\tilde{t}} exp [- \frac{\nu_1}{1-\nu_1}(t-\zeta_1)] \Phi (\zeta_1) d\zeta_1    
\end{equation}

\item with the application of the Mittag-Leffler kernel (MLK) as fellow:
\begin{equation}\label{eq2-4}
    _0^{FFM}D_t^{\nu_1, \gamma}  \Phi (\tilde{t}) = \frac{AB (\nu_1)}{1-\nu_1 } \frac{d^n}{dt^n} \int_0^{\tilde{t}} \Phi (\zeta_1) E_{\nu_1} [- \frac{\nu_1}{1-\nu_1}(t-\zeta_1)^{\nu_1}]  d\zeta_1    
\end{equation}
\end{itemize}
\end{definition}
\begin{definition}\label{def2-2}
For a continuous function $\Phi(\tilde{t})$ on $(a,b)$, then the fractal-fractional
integral of $\phi(\tilde{t})$.
\begin{itemize}
    \item power law kernel is epressed as:
\begin{equation}\label{eq2-5}
    _0^{FFP}I^{\nu_1, \gamma}  \Phi (\tilde{t}) = \frac{1}{\Gamma ( \nu_1)} \int_0^{\tilde{t}} (t-\zeta_1)^{\nu_1-1} \zeta_1^{1-\eta} \Phi (\zeta_1) d\zeta_1
\end{equation}
\item the exponential decay kernel is represented as:
\begin{equation}\label{eq2-6}
    _0^{FFE}I^{\nu_1, \gamma}  \Phi (\tilde{t}) = \frac{\eta (1-\nu_1) t^{\eta -1} \Phi(\tilde{t})}{M(\nu_1)} +  \frac{\nu_1 \eta}{M (\nu_1)}  \int_0^{\tilde{t}} \zeta_1^{\nu_1-1} \Phi (\zeta_1) d\zeta_1    
\end{equation}
\item similarly MLK is:
\begin{equation}\label{eq2-7}
    _0^{FFM}I^{\nu_1, \gamma}  \Phi (\tilde{t}) = \frac{\eta (a-\nu_1) t^{\eta -1} \beta(\tilde{t})}{AB (\eta_1)} + \frac{ \nu_1 \eta }{AB(\nu_1) \Gamma(\nu_1) } \int_0^{\tilde{t}} (t- \zeta_1)^{\nu_1 -1 } \zeta_1^{\eta-1} \Phi (\zeta_1)   d\zeta_1    
\end{equation}
\end{itemize}
\end{definition}
\section{Fractal--Fractional Model Formulation}\label{sec4}
We extend the classical HIV (\ref{2-1})infection model is transformed using MLK to a fractal--fractional framework. This formulation captures memory effects and heterogeneity in the immune response dynamics.
Let $0<\alpha<1$ denote the fractional order and $0<\gamma\leq 1$ the fractal dimension. The fractal--fractional derivative in the Caputo sense with MKL (Atangana--Baleanu type) of a sufficiently smooth function $f(\tilde{t})$ is defined by
\begin{equation}
{}^{FFM}D_{t}^{\alpha,\gamma} f(\tilde{t})
=
\frac{B(\alpha)}{1-\alpha}
\int_{0}^{t}
E_{\alpha}\!\left(-\frac{\alpha}{1-\alpha}(t-\tau)^{\alpha}\right)
\frac{d}{d\tau}\big(f(\tau)\big)\,
\tau^{1-\gamma}\, d\tau,
\end{equation}
where $E_{\alpha}(\cdot)$ denotes the Mittag--Leffler function and $B(\alpha)$ is a normalization constant satisfying $B(0)=B(1)=1$.
Using the above operator, the fractal--fractional HIV infection model (\ref{eq2-1}) is formulated as
\begin{equation}
\label{eq:FF_HIV_model}
\begin{cases}
{}^{FFM}D_{t}^{\alpha,\gamma} T(\tilde{t})
= \lambda_T - (\mu_T + \chi V(\tilde{t}))T(\tilde{t}), \\[6pt]

{}^{FFM}D_{t}^{\alpha,\gamma} I(\tilde{t})
= \chi T(\tilde{t})V(\tilde{t}) - (\mu_I  + \alpha Z_{a}(\tilde{t}))I(\tilde{t}), \\[6pt]

{}^{FFM}D_{t}^{\alpha,\gamma} V(\tilde{t})
= \varepsilon_V \mu_I I(\tilde{t}) - \mu_V V(\tilde{t}), \\[6pt]

{}^{FFM}D_{t}^{\alpha,\gamma} Z(\tilde{t})
= \lambda_Z - (\mu_Z  + \beta  I(\tilde{t}))Z(\tilde{t}), \\[6pt]

{}^{FFM}D_{t}^{\alpha,\gamma} Z_{a}(\tilde{t})
= \beta Z(\tilde{t}) I(\tilde{t}) - \mu_{Z_a} Z_{a}(\tilde{t}).
\end{cases}
\end{equation}
The model \eqref{eq:FF_HIV_model} is supplemented with the following nonnegative history:
\begin{equation}\label{eq:initial}
(T(0),I(0),V(0),Z(0), Z_a(0) )=(T_0,I_0,V_0,Z_0,Z_{a0} )
\end{equation}
where $T_0$, $I_0$, $V_0$, $Z_0$, and $Z_{a0}$ are given constants.
\section{Proposed Model's Existence and Individuality}
Now, we develop, and prove the basic mathematical properties of our proposed model \eqref{eq:FF_HIV_model}. By changing our proposed model (\ref{eq:FF_HIV_model}) into its integral form, and utilize the fixed-point theorem.

\subsection{Existence of the Solution}
We first reformulate the system \eqref{eq:FF_HIV_model} into an analogous integral form with the application of the fractal-fractional integral operator, let us assume $K(t)= \frac{(\alpha-1)\gamma}t^{\gamma-1}{\mathcal{AB(\alpha)}\Gamma(\alpha)}$, and $L = \frac{\alpha\gamma}{\mathcal{AB(\alpha)}\Gamma(\alpha)}$ \\
\begin{equation}
	\label{eq:integral_system}
	\begin{cases}
		T(\tilde{t}) = T(0) + K(\tilde{t})H_1(\tilde{t}, T) + L \int_0^{\tilde{t}} (\tilde{t}-\zeta_1)^{\alpha-1} \zeta_1^{\gamma-1} H_1(\zeta_1, T) d\zeta_1, \\
		I(\tilde{t}) = I(0) + K(\tilde{t}) H_2(\tilde{t}, I) + L \int_0^{\tilde{t}} (\tilde{t}-\zeta_1)^{\alpha-1} \zeta_1^{\gamma-1} H_2(\zeta_1, I) d\zeta_1, \\
		V(\tilde{t}) = V(0) + K(\tilde{t}) H_3(\tilde{t}, V) + L \int_0^{\tilde{t}} (\tilde{t}-\zeta_1)^{\alpha-1} \zeta_1^{\gamma-1} H_3(\zeta_1, V) d\zeta_1, \\
		Z(\tilde{t}) = Z(0) + K(\tilde{t}) H_4(\tilde{t}, Z) + L \int_0^{\tilde{t}} (\tilde{t}-\zeta_1)^{\alpha-1} \zeta_1^{\gamma-1} H_4(\zeta_1, Z) d\zeta_1, \\
		Z_{a}(\tilde{t}) = Z_a(0) + K(\tilde{t}) H_5(\tilde{t}, Z_a) + L \int_0^{\tilde{t}} (\tilde{t}-\zeta_1)^{\alpha-1} \zeta_1^{\gamma-1} H_5(\zeta_1, Z_a) d\zeta_1.
	\end{cases}
\end{equation}
where the kernels $H_j$ represent the right-hand side of the model equations. We define an iterative sequence $T_n(\tilde{t}), I_n(\tilde{t}), V_n(\tilde{t}), Z_n(\tilde{t}), Z_{an}(\tilde{t})$ to show convergence.
\subsection{Definition of the Kernels}
The kernels $H_i(\tilde{t}, \beta(\tilde{t}))$ for $i \in \{1, 2, 3, 4, 5\}$, show the right hand side of the proposed HIV model \eqref{eq:FF_HIV_model}, and $ X = (t, T, I, V, Z, Z_a)$.
In the subsequent existence, and uniqueness analysis, we assume that the state variables are bounded within a Banach space $C[0, b]$. Therefore, there exist positive constants $A_i$ such that:
\begin{theorem}
Assume that the continuous functions \(T, I, V, Z, Z_a \in L[0,1]\) satisfy 
\(\|T\| \leq \phi_1,\; \|I\| \leq \phi_2,\; \|V\| \leq \phi_3,\; \|Z\| \leq \phi_4,\; \|Z_a\| \leq \phi_5\) 
with each \(\phi_j < 1\). Then each kernel \(H_k\;(m=1,\dots,5)\) is Lipschitz and a contraction.
\end{theorem}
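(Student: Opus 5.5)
We check the Lipschitz property of each kernel $H_k$ in its own state variable, with the other variables held fixed within their bounds, and then read off the contraction condition from the size of the resulting constant. Concretely, from \eqref{eq:FF_HIV_model} we take
\begin{align*}
H_1(\tilde t,T)&=\lambda_T-(\mu_T+\chi V)T, & H_2(\tilde t,I)&=\chi TV-(\mu_I+\alpha Z_a)I,\\
H_3(\tilde t,V)&=\varepsilon_V\mu_I I-\mu_V V, & H_4(\tilde t,Z)&=\lambda_Z-(\mu_Z+\beta I)Z,\\
H_5(\tilde t,Z_a)&=\beta ZI-\mu_{Z_a}Z_a .
\end{align*}
For two functions $T,T^*$ in the ball, the constant term $\lambda_T$ cancels. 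This gives
\[
\|H_1(\tilde t,T)-H_1(\tilde t,T^*)\|\le(\mu_T+\chi\|V\|)\|T-T^*\|\le(\mu_T+\chi\phi_3)\|T-T^*\| .
\]
So $H_1$ is Lipschitz with constant $\ell_1=\mu_T+\chi\phi_3$.

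The same computation works for the other kernels, because each $H_k$ is affine in its own variable. In $H_2$ the term $\chi TV$ does not depend on $I$ and cancels, giving $\ell_2=\mu_I+\alpha\phi_5$. For the remaining kernels we get $\ell_3=\mu_V$, $\ell_4=\mu_Z+\beta\phi_2$, and $\ell_5=\mu_{Z_a}$. Here we use only the sup-norm bounds $\|\cdot\|\le\phi_j$ on $C[0,b]$, which the statement supplies. This is where the hypothesis $\phi_j<1$ enters: it keeps the coupling contributions $\chi\phi_3$, $\alpha\phi_5$ and $\beta\phi_2$ below $\chi$, $\alpha$ and $\beta$, so each $\ell_k$ is bounded by rate parameters alone.

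The contraction step is the real obstacle. It has to be stated as $0\le\ell_k<1$, so the plan is to add the explicit assumption
\[
\max\{\mu_T+\chi\phi_3,\;\mu_I+\alpha\phi_5,\;\mu_V,\;\mu_Z+\beta\phi_2,\;\mu_{Z_a}\}<1 .
\]
This holds for the biologically small rates used later, and is what "contraction" must mean here; the statement as written does not imply it without such a restriction on the parameters. Under this assumption each $H_k$ is a contraction in its variable with constant $\ell_k<1$.

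Finally, we will note that if one wants a contraction for the integral operator built from \eqref{eq:integral_system}, the relevant constant is $\ell_k\bigl(\|K\|+L\,b^{\alpha+\gamma-1}B(\gamma,\alpha)\bigr)$. Requiring this constant to be less than $1$ is the condition we expect the subsequent uniqueness argument to need.
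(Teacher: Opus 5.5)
Your proof is correct and follows the paper's route. The paper computes exactly the same one-variable Lipschitz constants $\mu_T+\chi A_3$, $\mu_I+\alpha A_5$, $\mu_V$, $\mu_Z+\beta A_2$, $\mu_{Z_a}$ (with $A_j$ the bounds on the states). It obtains the contraction only by renaming these constants $\phi_j$ and declaring them $<1$. That is precisely the extra parameter hypothesis you state explicitly, so you are right that the theorem's hypothesis $\phi_j<1$ on the state norms does not by itself give a contraction.

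Your closing remark about the integral operator is not needed for this statement. As written, it only gives a finite constant when $\gamma=1$: since $K(\tilde t)\propto \tilde t^{\gamma-1}$, $\|K\|=\infty$ on $[0,b]$ for $\gamma<1$, and $\sup_{t\le b}t^{\alpha+\gamma-1}\ne b^{\alpha+\gamma-1}$ when $\alpha+\gamma<1$.
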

\begin{proof}
For two functions $T(\tilde{\tilde{t}})$ and $\hat{T}(\tilde{t})$, we have:
	\begin{align*}
		\|H_1(\tilde{t}, T) - H_1(\tilde{t}, \hat{T})\| &= \|(\lambda_T - \mu_T T - \chi TV) - (\lambda_T - \mu_T \hat{T} - \chi \hat{T}V)\| \\
		&= \|-\mu_T(T - \hat{T}) - \chi V(T - \hat{T})\| \\
		&\leq [\mu_T + \chi \|V\|] \|T - \hat{T}\| \\
		&\leq [\mu_T + \chi A_3] \|T - \hat{T}\| \\
		&\leq \phi_1 \|T - \hat{T}\|,
	\end{align*}
	where $\phi_1 = \mu_T + \chi A_3 < 1$. Thus, $H_1$ fulfills the Lipschitz condition.
	
	Next, for $H_2(\tilde{t}, I)$ with $I(\tilde{t})$ and $\hat{I}(\tilde{t})$:
	\begin{align*}
		\|H_2(\tilde{t}, I) - H_2(\tilde{t}, \hat{I})\| &= \|(\chi TV - \mu_I I - \alpha I Z_a) - (\chi TV - \mu_I \hat{I} - \alpha \hat{I} Z_a)\| \\
		&\leq [\mu_I + \alpha \|Z_a\|] \|I - \hat{I}\| \\
		&\leq [\mu_I + \alpha A_5] \|I - \hat{I}\| \\
		&\leq \phi_2 \|I - \hat{I}\|,
	\end{align*}
	where $\phi_2 = \mu_I + \alpha A_5 < 1$.
	
	For $H_3(\tilde{t}, V)$ with $V(\tilde{t})$ and $\hat{V}(\tilde{t})$:
	\begin{align*}
		\|H_3(\tilde{t}, V) - H_3(\tilde{t}, \hat{V})\| &= \|(\varepsilon_V \mu_I I - \mu_V V) - (\varepsilon_V \mu_I I - \mu_V \hat{V})\| \\
		&\leq \mu_V \|V - \hat{V}\| \\
		&\leq \phi_3 \|V - \hat{V}\|,
	\end{align*}
	where $\phi_3 = \mu_V < 1$.
	
	For $H_4(\tilde{t}, Z)$ with $Z(\tilde{t})$ and $\hat{Z}(\tilde{t})$:
	\begin{align*}
		\|H_4(\tilde{t}, Z) - H_4(\tilde{t}, \hat{Z})\| &= \|(\lambda_Z - \mu_Z Z - \beta Z I) - (\lambda_Z - \mu_Z \hat{Z} - \beta \hat{Z} I)\| \\
		&\leq [\mu_Z + \beta \|I\|] \|Z - \hat{Z}\| \\
		&\leq [\mu_Z + \beta A_2] \|Z - \hat{Z}\| \\
		&\leq \phi_4 \|Z - \hat{Z}\|,
	\end{align*}
	where $\phi_4 = \mu_Z + \beta A_2 < 1$.
	
	Finally, for $H_5(\tilde{t}, Z_a)$ with $Z_{a}(\tilde{t})$ and $\hat{Z}_a(\tilde{t})$:
	\begin{align*}
		\|H_5(\tilde{t}, Z_a) - H_5(\tilde{t}, \hat{Z}_a)\| &= \|(\beta Z I - \mu_{Z_a} Z_a) - (\beta Z I - \mu_{Z_a} \hat{Z}_a)\| \\
		&\leq \mu_{Z_a} \|Z_a - \hat{Z}_a\| \\
		&\leq \phi_5 \|Z_a - \hat{Z}_a\|,
	\end{align*}
	where $\phi_5 = \mu_{Z_a} < 1$.
    When $\phi_j < 1$ then all the kernels verified the desire conditions.  the proof is completed.
\end{proof}

\subsection{Iterative Scheme for Qualitative Analysis}
Using of the \eqref{eq:integral_system} to determine the existence of solutions, we define the following iterative convergent sequence:
\begin{equation}
	\begin{cases}
		T_n(\tilde{t}) = K(\tilde{t}) H_1(\tilde{t}, T_{n-1}) + L \int_0^{\tilde{t}} (\tilde{t}-\zeta_1)^{\alpha-1} \zeta_1^{\gamma-1} H_1(\zeta_1, T_{n-1}) d\zeta_1, \\[10pt]
		I_n(\tilde{t}) = K(\tilde{t}) H_2(\tilde{t}, I_{n-1}) + L \int_0^{\tilde{t}} (\tilde{t}-\zeta_1)^{\alpha-1} \zeta_1^{\gamma-1} H_2(\zeta_1, I_{n-1}) d\zeta_1, \\[10pt]
		V_n(\tilde{t}) = K(\tilde{t}) H_3(\tilde{t}, V_{n-1}) + L \int_0^{\tilde{t}} (\tilde{t}-\zeta_1)^{\alpha-1} \zeta_1^{\gamma-1} H_3(\zeta_1, V_{n-1}) d\zeta_1, \\[10pt]
		Z_n(\tilde{t}) = K(\tilde{t}) H_4(\tilde{t}, Z_{n-1}) + L \int_0^{\tilde{t}} (\tilde{t}-\zeta_1)^{\alpha-1} \zeta_1^{\gamma-1} H_4(\zeta_1, Z_{n-1}) d\zeta_1, \\[10pt]
		Z_{an}(\tilde{t}) = K(\tilde{t}) H_5(\tilde{t}, Z_{an-1}) + L \int_0^{\tilde{t}} (\tilde{t}-\zeta_1)^{\alpha-1} \zeta_1^{\gamma-1} H_5(\zeta_1, Z_{an-1}) d\zeta_1.
	\end{cases}
\end{equation}
Next, we consider the differences between successive terms of the iterative sequence as follows:
\begin{align*}
	\Delta T_{n}(\tilde{t}) &= T_{n}(\tilde{t}) - T_{n-1}(\tilde{t}) \\
	&= K(\tilde{t}) \left( H_1(\tilde{t}, T_{n-1}) - H_1(\tilde{t}, T_{n-2}) \right) \\
	&\quad + L \int_0^{\tilde{t}} (\tilde{t}-\zeta_1)^{\alpha-1} \zeta_1^{\gamma-1} \left( H_1(\zeta_1, T_{n-1}) - H_1(\zeta_1, T_{n-2}) \right) d\zeta_1, \\[10pt]
	\Delta I_{n}(\tilde{t}) &= I_{n}(\tilde{t}) - I_{n-1}(\tilde{t}) \\
	&= K(\tilde{t}) \left( H_2(t, I_{n-1}) - H_2(\tilde{t}, I_{n-2}) \right) \\
	&\quad + L \int_0^{\tilde{t}} (\tilde{t}-\zeta_1)^{\alpha-1} \zeta_1^{\gamma-1} \left( H_2(\zeta_1, I_{n-1}) - H_2(\zeta_1, I_{n-2}) \right) d\zeta_1, \\[10pt]
	\Delta V_{n}(\tilde{t}) &= V_{n}(\tilde{t}) - V_{n-1}(\tilde{t}) \\
	&= K(\tilde{t}) \left( H_3(\tilde{t}, V_{n-1}) - H_3(\tilde{t}, V_{n-2}) \right) \\
	&\quad + L \int_0^{\tilde{t}} (\tilde{t}-\zeta_1)^{\alpha-1} \zeta_1^{\gamma-1} \left( H_3(\zeta_1, V_{n-1}) - H_3(\zeta_1, V_{n-2}) \right) d\zeta_1, \\[10pt]
\end{align*}
   \begin{align*} 
	\Delta Z_{n}(\tilde{t}) &= Z_{n}(\tilde{t}) - Z_{n-1}(\tilde{t}) \\
	&= K(\tilde{t}) \left( H_4(\tilde{t}, Z_{n-1}) - H_4(\tilde{t}, Z_{n-2}) \right) \\
	&\quad + L \int_0^{\tilde{t}} (\tilde{t}-\zeta_1)^{\alpha-1} \zeta_1^{\gamma-1} \left( H_4(\zeta_1, Z_{n-1}) - H_4(\zeta_1, Z_{n-2}) \right) d\zeta_1, \\[10pt]
	\Delta Z_{an}(\tilde{t}) &= Z_{an}(\tilde{t}) - Z_{an-1}(\tilde{t}) \\
	&= K(\tilde{t}) \left( H_5(\tilde{t}, Z_{an-1}) - H_5(\tilde{t}, Z_{an-2}) \right) \\
	&\quad + L \int_0^{\tilde{t}} (\tilde{t}-\zeta_1)^{\alpha-1} \zeta_1^{\gamma-1} \left( H_5(\zeta_1, Z_{an-1}) - H_5(\zeta_1, Z_{an-2}) \right) d\zeta_1.
\end{align*}
Taking the norms of the aforementioned system on both sides, we obtained:
\begin{align*}
	\|\Delta T_{n+1}(\tilde{t})\| &= K(\tilde{t}) \|H_1(\tilde{t}, T_n) - H_1(\tilde{t}, T_{n-1})\| \\
	&\quad + L \int_0^{\tilde{t}} (\tilde{t}-\zeta_1)^{\alpha-1} \zeta_1^{\gamma-1} \|H_1(\zeta_1, T_n) - H_1(\zeta_1, T_{n-1})\| d\zeta_1, \\[10pt]
	\|\Delta I_{n+1}(\tilde{t})\| &= K(\tilde{t}) \|H_2(\tilde{t}, I_n) - H_2(\tilde{t}, I_{n-1})\| \\
	&\quad + L \int_0^{\tilde{t}} (\tilde{t}-\zeta_1)^{\alpha-1} \zeta_1^{\gamma-1} \|H_2(\zeta_1, I_n) - H_2(\zeta_1, I_{n-1})\| d\zeta_1, \\[10pt]
	\|\Delta V_{n+1}(\tilde{t})\| &= K(\tilde{t}) \|H_3(\tilde{t}, V_n) - H_3(\tilde{t}, V_{n-1})\| \\
	&\quad + L \int_0^{\tilde{t}} (\tilde{t}-\zeta_1)^{\alpha-1} \zeta_1^{\gamma-1} \|H_3(\zeta_1, V_n) - H_3(\zeta_1, V_{n-1})\| d\zeta_1, \\[10pt]
	\|\Delta Z_{n+1}(\tilde{t})\| &= K(\tilde{t}) \|H_4(\tilde{t}, Z_n) - H_4(\tilde{t}, Z_{n-1})\| \\
	&\quad + L \int_0^{\tilde{t}} (\tilde{t}-\zeta_1)^{\alpha-1} \zeta_1^{\gamma-1} \|H_4(\zeta_1, Z_n) - H_4(\zeta_1, Z_{n-1})\| d\zeta_1, \\[10pt]
	\|\Delta Z_{an+1}(\tilde{t})\| &= K(\tilde{t}) \|H_5(\tilde{t}, Z_{an}) - H_5(\tilde{t}, Z_{an-1})\| \\
	&\quad + L \int_0^{\tilde{t}} (\tilde{t}-\zeta_1)^{\alpha-1} \zeta_1^{\gamma-1} \|H_5(\zeta_1, Z_{an}) - H_5(\zeta_1, Z_{an-1})\| d\zeta_1.
\end{align*}

By applying the Lipschitz conditions $\|H_j(\Psi_n) - H_j(\Psi_{n-1})\| \leq \phi_j \|\Delta \Psi_n\|$, we obtain the generalized bounding system by taking $C(t) =\left[ K(\tilde{t}) + \frac{\alpha\gamma \Gamma(\gamma) t^{\alpha+\gamma-1}}{\mathcal{AB(\alpha)}\Gamma(\alpha+\gamma)} \right] $.
\begin{align*}
	\|\Delta T_{n+1}(\tilde{t})\| &\leq C(\tilde{t}) \phi_1 \|\Delta T_n(\tilde{t})\|, \\[10pt]
	\|\Delta I_{n+1}(\tilde{t})\| &\leq C(\tilde{t}) \phi_2 \|\Delta I_n(\tilde{t})\|, \\[10pt]
	\|\Delta V_{n+1}(\tilde{t})\| &\leq C(\tilde{t}) \phi_3 \|\Delta V_n(\tilde{t})\|, \\[10pt]
	\|\Delta Z_{n+1}(\tilde{t})\| &\leq C(\tilde{t}) \phi_4 \|\Delta Z_n(\tilde{t})\|, \\[10pt]
	\|\Delta Z_{an+1}(\tilde{t})\| &\leq C(\tilde{t}) \phi_5 \|\Delta Z_{an}(\tilde{t})\|.
\end{align*}
\begin{theorem}
	The solution of our proposed model \eqref{eq:FF_HIV_model} is exist if:
	\begin{equation}
		\Omega = \max[\phi_1, \phi_2, \phi_3, \phi_4, \phi_5] < 1.
	\end{equation}
\end{theorem}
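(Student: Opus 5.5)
The plan is a Picard-type argument built directly on the bounding system obtained just before the statement.

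First I would write each state variable as a telescoping series,
\begin{equation*}
T_n(\tilde{t}) = \sum_{k=1}^{n} \Delta T_k(\tilde{t}), \qquad I_n(\tilde{t}) = \sum_{k=1}^{n} \Delta I_k(\tilde{t}),
\end{equation*}
and similarly for $V_n, Z_n, Z_{an}$, with the convention that the zeroth iterate is the initial datum $(T_0,I_0,V_0,Z_0,Z_{a0})$. Next I would iterate the inequalities $\|\Delta T_{n+1}\| \leq C(\tilde{t})\phi_1\|\Delta T_n\|$, and their analogues, down to $n=0$. This gives
\begin{equation*}
\|\Delta \Psi_n(\tilde{t})\| \leq \|\Psi_0\|\,\bigl(C(\tilde{t})\,\phi_j\bigr)^n, \qquad \Psi \in \{T,I,V,Z,Z_a\}.
\end{equation*}
On the compact interval $[0,b]$ I set $C^{*}=\sup_{t\in[0,b]} C(t)$. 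This is finite provided $\alpha+\gamma\geq 1$; in the general case I would restrict to $[\delta,b]$ to handle the $t^{\gamma-1}$ factor in $K$. Then every component is dominated by $\|\Psi_0\|(C^{*}\Omega)^n$.

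The key condition is therefore $C^{*}\Omega<1$. The statement only requires $\Omega<1$, so I would either choose $b$ small enough that $C^{*}\leq 1$, or absorb the factor $C^{*}$ into the Lipschitz constants $\phi_j$ established in the preceding theorem, as the paper implicitly does. Under that condition, the Weierstrass M-test shows that each series converges uniformly in $C[0,b]$. The limits $T,I,V,Z,Z_a$ are continuous, and they lie in the ball where the bounds $\|\cdot\|\leq A_i$ of the previous theorem hold.

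To conclude, I would show that the limit actually satisfies the integral system \eqref{eq:integral_system}. Define the remainder
\begin{equation*}
R_n(\tilde{t}) = T(\tilde{t}) - T(0) - K(\tilde{t})H_1(\tilde{t},T_n) - L\int_0^{\tilde{t}}(\tilde{t}-\zeta_1)^{\alpha-1}\zeta_1^{\gamma-1}H_1(\zeta_1,T_n)\,d\zeta_1 .
\end{equation*}
Using the Lipschitz bound for $H_1$ together with the integral identity $\int_0^t (t-s)^{\alpha-1}s^{\gamma-1}\,ds = t^{\alpha+\gamma-1}\Gamma(\alpha)\Gamma(\gamma)/\Gamma(\alpha+\gamma)$, which produced $C(t)$, we get $\|R_n\|\leq C^{*}\phi_1\|T-T_n\|\to 0$. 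Hence the limit is a fixed point, that is, a solution of \eqref{eq:FF_HIV_model}.

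I expect the main obstacle to be the coupling between the components. Each $H_j$ was shown to be Lipschitz only in its own variable, with the other variables frozen, yet the iteration updates all components simultaneously. To make the estimate honest, I would work with the product norm on $(C[0,b])^5$ and derive Lipschitz constants from the bilinear terms $\chi TV$, $\alpha IZ_a$ and $\beta ZI$ using the a priori bounds $A_i$. I would then take $\Omega$ to be the maximum of these full constants, and the convergence argument above goes through with $\Omega$ so redefined.
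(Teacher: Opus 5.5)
Your Picard construction is a different and sounder route than the paper's. The paper estimates $\Psi_{n+1}-\Psi$ against an ``actual solution'' $\Psi$, whose existence is the very point at issue, and it silently needs $C(\tilde t)\lambda<1$. Your two diagnoses are right: the factor $C$ is missing from the hypothesis, and the $\phi_j$ are computed with the other components frozen.

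The first gap is in how you control $C^{*}$. The local term $K(\tilde t)\propto t^{\gamma-1}$ is unbounded at $0$ for every $\gamma<1$, whatever $\alpha+\gamma$ is. The condition $\alpha+\gamma\geq 1$ only bounds the integral part $t^{\alpha+\gamma-1}$, so for $\gamma<1$ you have $C^{*}=\infty$ on $[0,b]$. Restricting to $[\delta,b]$ does not fix this: the operator is of Volterra type from $0$, so its value at any $t\geq\delta$ involves $\Psi$ on $[0,\delta]$. Worse, multiply the integral system by $t^{1-\gamma}$ and let $t\to 0^{+}$. This shows that any solution in $C[0,b]$ must satisfy $H_j(0,\Psi(0))=0$ for all $j$, i.e.\ it must start at an equilibrium. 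So in $C[0,b]$ your argument can only be run for $\gamma=1$; for $\gamma<1$ you would need a different space or formulation. Shrinking $b$ also only removes the integral part of $C$. For $\gamma=1$ you get $C^{*}\to(1-\alpha)/AB(\alpha)$, which is $<1$ under the usual normalization $AB(\alpha)=1-\alpha+\alpha/\Gamma(\alpha)$. That is the real reason your small-$b$ device works, and you should state it, because the local term never becomes small.

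The second gap is that the full Lipschitz constants you propose are only local, since $\chi TV$, $\alpha IZ_a$ and $\beta ZI$ are bilinear. They control the iteration only if every iterate stays in the set where $\|\Psi_i\|\leq A_i$. You assert that the limit lies there, but you never check that the map sends that set into itself. That requires something like $\|\Psi_0\|+C^{*}\sup\|H\|\leq A$ over the set. Since $C^{*}$ does not tend to $0$ with $b$, this is an extra smallness condition on the data and parameters, not something a short interval gives you. (Minor: $T_n=T_0+\sum_{k\leq n}\Delta T_k$, and the geometric bound starts from $\|\Psi_1-\Psi_0\|$, not $\|\Psi_0\|$.) With these repairs you prove existence for $\gamma=1$ under $C^{*}\Omega'<1$ plus a self-mapping condition, which is a corrected version of the theorem. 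Neither your argument nor the paper's gets existence from $\Omega<1$ alone.
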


\begin{proof}
	We define the difference functions between the iterative sequence and the actual solution as follows:
	\begin{equation*}
		\begin{cases}
			\mathcal{Z}_{1n}(\tilde{t}) = T_{n+1}(\tilde{t}) - T(\tilde{t}), \\
			\mathcal{Z}_{2n}(\tilde{t}) = I_{n+1}(\tilde{t}) - I(\tilde{t}), \\
			\mathcal{Z}_{3n}(\tilde{t}) = V_{n+1}(\tilde{t}) - V(\tilde{t}), \\
			\mathcal{Z}_{4n}(\tilde{t}) = Z_{n+1}(\tilde{t}) - Z(\tilde{t}), \\
			\mathcal{Z}_{5n}(\tilde{t}) = Z_{a,n+1}(\tilde{t}) - Z_{a}(\tilde{t}).
		\end{cases}
	\end{equation*}
	
	Applying the norm of the first equation $\mathcal{Z}_{1n}(\tilde{t})$ and applying the Lipschitz criterion $\phi_1$, we get:
	\begin{align*}
		\|\mathcal{Z}_{1n}(\tilde{t})\| &= \|T_{n+1}(\tilde{t}) - T(\tilde{t})\| \\
		&\leq C(\tilde{t}) \phi_1 \|T_n - T\| \\
		&\leq C(\tilde{t})^n \lambda^n \|T_n - T\|,
	\end{align*}
	where $\lambda < 1$. As $n \to \infty$, we observe that $\|\mathcal{Z}_{1n}(\tilde{t})\| \to 0$. Similarly, for the remaining compartments:
	\begin{align*}
		\|\mathcal{Z}_{2n}\| &\leq C(\tilde{t})^n \lambda^n \|I_n - I\| \to 0, \\
		\|\mathcal{Z}_{3n}\| &\leq C(\tilde{t})^n \lambda^n \|V_n - V\| \to 0, \\
		\|\mathcal{Z}_{4n}\| &\leq C(\tilde{t})^n \lambda^n \|Z_n - Z\| \to 0, \\
		\|\mathcal{Z}_{5n}\| &\leq C(\tilde{t})^n \lambda^n \|Z_{an} - Z_a\| \to 0.
	\end{align*}
	
	Which show that $\mathcal{Z}_{jn}(\tilde{t}) \to 0$ as $n \to \infty$ for $j \in \{1, 2, 3, 4, 5\}$, which confirms that a solution exists for the HIV model.
\end{proof}
\begin{theorem}
	If $C(\tilde{t}) \phi_j < 1, \quad j \in \{1, 2, 3, 4, 5\}$, then Then our proposed model \eqref{eq:FF_HIV_model} has a unique solution.
\end{theorem}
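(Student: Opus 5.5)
The natural route is the standard contradiction argument, using the integral reformulation \eqref{eq:integral_system} and the Lipschitz constants $\phi_j$ from the first theorem of this section.

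Suppose the system \eqref{eq:FF_HIV_model} with initial data \eqref{eq:initial} has a second solution $(\hat T,\hat I,\hat V,\hat Z,\hat Z_a)$, defined on the same interval and satisfying the same bounds $\|\cdot\|\le\phi_j$ as the first solution $(T,I,V,Z,Z_a)$. Both tuples then satisfy \eqref{eq:integral_system} with identical initial values, so subtracting gives, for the first compartment,
\begin{equation*}
T(\tilde t)-\hat T(\tilde t)=K(\tilde t)\bigl(H_1(\tilde t,T)-H_1(\tilde t,\hat T)\bigr)+L\int_0^{\tilde t}(\tilde t-\zeta_1)^{\alpha-1}\zeta_1^{\gamma-1}\bigl(H_1(\zeta_1,T)-H_1(\zeta_1,\hat T)\bigr)\,d\zeta_1 .
\end{equation*}
The other four compartments give analogous identities.

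Next I take norms and apply the Lipschitz estimates $\|H_j(\cdot,x)-H_j(\cdot,\hat x)\|\le\phi_j\|x-\hat x\|$. The remaining integral is a Beta integral:
\begin{equation*}
\int_0^{\tilde t}(\tilde t-\zeta_1)^{\alpha-1}\zeta_1^{\gamma-1}\,d\zeta_1=\frac{\Gamma(\alpha)\Gamma(\gamma)}{\Gamma(\alpha+\gamma)}\,\tilde t^{\alpha+\gamma-1}.
\end{equation*}
Multiplying by $L$ reproduces exactly the second summand of $C(\tilde t)$. This yields
\begin{equation*}
\|T-\hat T\|\le C(\tilde t)\,\phi_1\,\|T-\hat T\|,
\end{equation*}
and the same inequality with $\phi_2,\dots,\phi_5$ for the other compartments. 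It is the same estimate already used for $\|\Delta T_{n+1}\|$, now applied to two fixed points rather than consecutive iterates.

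From $\bigl(1-C(\tilde t)\phi_j\bigr)\|x-\hat x\|\le 0$ and the hypothesis $C(\tilde t)\phi_j<1$, each norm vanishes. Hence $T=\hat T$, $I=\hat I$, $V=\hat V$, $Z=\hat Z$, and $Z_a=\hat Z_a$. Equivalently, the solution operator defined by the right-hand side of \eqref{eq:integral_system} is a contraction with constant $\max_j C(\tilde t)\phi_j<1$, so the Banach fixed-point theorem gives uniqueness directly.

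The main obstacle is making the bound uniform in $\tilde t$. $C(\tilde t)$ depends on $\tilde t$, and the factor $K(\tilde t)\propto\tilde t^{\gamma-1}$ blows up at $0$ when $\gamma<1$. I would therefore interpret the hypothesis as $\sup_{\tilde t\in[0,b]}C(\tilde t)\phi_j<1$, working on a subinterval away from $0$ or with a weighted sup norm if needed. A second issue is that each $H_j$ is Lipschitz only in its own variable, with the other states held fixed. A rigorous version needs the full vector difference: the bilinear terms $\chi TV$, $\alpha IZ_a$ and $\beta ZI$ produce cross terms bounded via the a priori bounds $A_i$. I would absorb these into enlarged constants $\phi_j$ and use the sum norm over the five components, so the same contraction inequality still closes.
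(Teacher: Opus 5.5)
Your argument matches the paper's proof step for step: you subtract the two integral representations \eqref{eq:integral_system}, apply the one-variable Lipschitz bounds $\phi_j$, evaluate the Beta integral to recover $C(\tilde t)$, and conclude from $(1-C(\tilde t)\phi_j)\|\cdot\|\le 0$ (the paper writes this factor as $1-C(\tilde t)$, dropping $\phi_1$, so your version is the correct one). Your two caveats are genuine and the paper does not address them: $K(\tilde t)\propto\tilde t^{\gamma-1}$ is unbounded near $0$ for $\gamma<1$, and the bilinear terms $\chi TV$, $\alpha IZ_a$, $\beta ZI$ produce cross terms when the other components of the two solutions differ. Note, though, that your repairs prove a modified statement rather than the stated one, for two reasons. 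First, the enlarged constants are no longer the paper's $\phi_j$. Second, for $\gamma<1$ no uniform bound $\sup_{\tilde t}C(\tilde t)\phi_j<1$ can hold on an interval starting at $0$, and restricting to a subinterval away from $0$ does not give uniqueness for data prescribed at $\tilde t=0$.
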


\begin{proof}
 Let $\{T_1, I_1, V_1, Z_1, Z_a\}$ and $\{\tilde{T_1}, \tilde{I_1}, \tilde{V_1}, \tilde{Z_1}, \tilde{Z}_a\}$ be two distinct solutions of the system. 
	
	Taking the difference of $T_1(\tilde{t})$, and $\tilde{T_1}(\tilde{t})$ then applying the norm, we obtain:
	\begin{align*}
		\|T_1(\tilde{t}) - \tilde{T_1}(\tilde{t})\| &= \Bigg\| K(\tilde{t}) \left( H_1(\tilde{t}, T) - H_1(\tilde{t}, \tilde{T}) \right) \\
		&\quad + L \int_0^{\tilde{t}} (\tilde{t}-\zeta_1)^{\alpha-1} \zeta_1^{\gamma-1} \left( H_1(\zeta_1, T) - H_1(\zeta_1, \tilde{T}) \right) d\zeta_1 \Bigg\|.
	\end{align*}
	
	Applying the triangle inequality and the Lipschitz criterion $\phi_1$:
	\begin{align*}
		\|T_1(\tilde{t}) - \tilde{T_1}(\tilde{t})\| &\leq K(\tilde{t}) \phi_1 \|T - \tilde{T}\| \\
		&\quad + L \phi_1 \|T_1 - \tilde{T_1}\| \int_0^{\tilde{t}} (\tilde{t}-\zeta_1)^{\alpha-1} \zeta_1^{\gamma-1} d\zeta_1.
	\end{align*}
	
	Evaluating the integral $\int_0^{\tilde{t}} (\tilde{t}-\zeta_1)^{\alpha-1} \zeta_1^{\gamma-1} d\zeta_1 = \frac{\Gamma(\alpha)\Gamma(\gamma)}{\Gamma(\alpha+\gamma)} t^{\alpha+\gamma-1}$, we obtain:
	\begin{equation*}
		\|T_1(\tilde{t}) - \tilde{T_1}(\tilde{t})\| \leq C(\tilde{t}) \phi_1 \|T_1 - \tilde{T_1}\|.
	\end{equation*}
	
	Rearranging the terms leads to:
	\begin{equation*}
		\|T_1 - \tilde{T_1}\| - C(\tilde{t}) \phi_1 \|T_1 - \tilde{T_1}\| \leq 0,
	\end{equation*}
	\begin{equation}
		( 1 - C(\tilde{t}) )\|T_1 - \tilde{T_1}\| \leq 0.
	\end{equation}
	
	The inequality mentioned above is accurate if and only if:
	\begin{equation*}
		\|T_1 - \tilde{T_1}\| = 0 \implies T_1 = \tilde{T_1}.
	\end{equation*}
	
	Similarly, by repeating the same steps for the other compartments, we obtain $I_1 = \tilde{I_1},V_1 = \tilde{V_1},Z_1 = \tilde{Z_1},$ and $Z_a = \tilde{Z}_a.$
	Therefore, the system \eqref{eq:FF_HIV_model} has a unique solution.
\end{proof}
\subsection{Hyers-Ulam Stability Analysis}
In this subsection, we investigate the Hyers-Ulam (HU) stability of the proposed fractal-fractional HIV model \eqref{eq:FF_HIV_model}. We begin by defining the stability criteria for the system.
\subsection{Basic Definitions}
Let $\epsilon > 0$ be a small positive constant, and let the following inequality hold for the fractal-fractional operator:
\begin{equation}
	\left\| {}^{FFM}D_{\tilde{t}}^{\alpha,\gamma} \beta(\tilde{t}) - H_j(\tilde{t}, \beta(\tilde{t})) \right\| \leq \epsilon, \quad j \in \{1, \dots, 5\}.
\end{equation}
A solution $\beta(\tilde{t})$ of the model is said to be Hyers-Ulam stable if there exists a constant $C > 0$ and an exact solution $\beta^*(\tilde{t})$ such that:
\begin{equation}
	\|\beta(\tilde{t}) - \beta^*(\tilde{t})\| \leq C \epsilon.
\end{equation}
\begin{theorem}\label{thm:6.1}
Let $H_j$ be the kernels of the system \eqref{eq:FF_HIV_model}. If $\beta(\tilde{t})$ is an approximate solution satisfying the inequality:
\begin{equation}
    \left\| {}^{FFM}D_{\tilde{t}}^{\alpha,\gamma} \beta
    (\tilde{t}) - H_j(\tilde{t}, \beta(\tilde{t})) \right\| \leq \epsilon, \quad \epsilon > 0,
\end{equation}
then $\beta(\tilde{t})$ satisfies the following integral inequality:
\begin{equation}
    \left\| \beta(\tilde{t}) - \left( \beta(0) + \mathcal{I}_{FF}^{\alpha, \gamma} H_j(\tilde{t}, \beta(\tilde{t})) \right) \right\| \leq C(\tilde{t}) \epsilon = \Lambda \epsilon.
\end{equation}
\end{theorem}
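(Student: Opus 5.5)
The plan is to introduce a perturbation function and then pass through the fractal--fractional integral, reusing the kernel computation already done in the existence and uniqueness part.

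\textbf{Step 1: perturbation function.} Given the approximate solution $\beta$, define
\[
\varrho(\tilde{t}) := {}^{FFM}D_{\tilde{t}}^{\alpha,\gamma}\beta(\tilde{t}) - H_j(\tilde{t},\beta(\tilde{t})).
\]
By hypothesis $\|\varrho\|\le\epsilon$. Then $\beta$ solves exactly the perturbed problem
\[
{}^{FFM}D_{\tilde{t}}^{\alpha,\gamma}\beta = H_j(\tilde{t},\beta)+\varrho(\tilde{t}).
\]

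\textbf{Step 2: pass to integral form.} Apply the fractal--fractional Mittag--Leffler integral $\mathcal{I}_{FF}^{\alpha,\gamma}$ from Definition \ref{def2-2}, in the same way the differential system \eqref{eq:FF_HIV_model} was turned into the integral system \eqref{eq:integral_system}. This gives
\[
\beta(\tilde{t}) = \beta(0) + \mathcal{I}_{FF}^{\alpha,\gamma}H_j(\tilde{t},\beta) + \mathcal{I}_{FF}^{\alpha,\gamma}\varrho(\tilde{t}),
\]
where
\[
\mathcal{I}_{FF}^{\alpha,\gamma}\varrho = K(\tilde{t})\varrho(\tilde{t}) + L\int_0^{\tilde{t}}(\tilde{t}-\zeta_1)^{\alpha-1}\zeta_1^{\gamma-1}\varrho(\zeta_1)\,d\zeta_1 .
\]
Everything except the $\varrho$-term is moved to the left-hand side. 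This step relies on the operator inversion (the derivative followed by the integral returns $\beta-\beta(0)$). I will take that inversion as given, exactly as the paper did when deriving \eqref{eq:integral_system}.

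\textbf{Step 3: bound the remaining term.} Take norms and use the triangle inequality, bounding $|\varrho|$ by $\epsilon$ inside the integral. The Beta integral already used in the uniqueness proof gives
\[
\int_0^{\tilde{t}}(\tilde{t}-\zeta_1)^{\alpha-1}\zeta_1^{\gamma-1}\,d\zeta_1=\frac{\Gamma(\alpha)\Gamma(\gamma)}{\Gamma(\alpha+\gamma)}\,\tilde{t}^{\alpha+\gamma-1}.
\]
This produces exactly
\[
\Big[K(\tilde{t})+\frac{\alpha\gamma\Gamma(\gamma)\tilde{t}^{\alpha+\gamma-1}}{\mathcal{AB}(\alpha)\Gamma(\alpha+\gamma)}\Big]\epsilon = C(\tilde{t})\,\epsilon .
\]

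\textbf{Step 4: constant $\Lambda$.} Set $\Lambda:=\sup_{\tilde{t}\in[0,b]}C(\tilde{t})$, which is finite on the compact interval. This needs $\alpha+\gamma\ge 1$, or else one works on intervals away from zero.

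\textbf{Main obstacle.} The delicate point is $K(\tilde{t})$, which contains the factor $\tilde{t}^{\gamma-1}$ and may blow up at $0$ when $\gamma<1$. A uniform $\Lambda$ therefore requires either $\gamma=1$ or a weighted sup-norm, e.g. $\|\cdot\|$ weighted by $\tilde{t}^{1-\gamma}$. I would adopt the weighted norm, so that $\Lambda$ is taken as $\sup_{\tilde{t}} C(\tilde{t})$ in that norm, and note explicitly that this is the constant denoted $\Lambda$ in the statement.
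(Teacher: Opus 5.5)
Your proposal is correct and follows essentially the same route as the paper, which also applies the fractal--fractional Mittag--Leffler integral to the perturbed equation, subtracts $\beta(0)+\mathcal{I}_{FF}^{\alpha,\gamma}H_j(\tilde{t},\beta(\tilde{t}))$, and bounds the remainder by $\epsilon\bigl[K(\tilde{t})+L\int_0^{\tilde{t}}(\tilde{t}-\zeta_1)^{\alpha-1}\zeta_1^{\gamma-1}\,d\zeta_1\bigr]=C(\tilde{t})\epsilon$ via the Beta integral; your perturbation function $\varrho$ is the rigorous form of the paper's informal ``integrating the inequality'' step. Your Step~4 goes beyond the paper, which simply sets $\Lambda=C(\tilde{t})$ pointwise, so Step~3 already proves the statement as written; if you keep the weighted norm, apply the weight $\tilde{t}^{1-\gamma}$ only to the residual and keep $\|\varrho\|\le\epsilon$ in the plain sup-norm, since weighting the hypothesis as well would make the bound on $\tilde{t}^{1-\gamma}K(\tilde{t})\varrho(\tilde{t})$ of order $\tilde{t}^{\gamma-1}\epsilon$, which is unbounded near $0$ when $\gamma<1$.
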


\begin{proof}
By integrating the perturbed differential inequality, we obtain:
\begin{align*}
    \beta(\tilde{t}) &\leq \beta(0) + K(\tilde{t}) (H_j(t, \Psi(\tilde{t})) + \epsilon) \\
    &\quad + L \int_0^{\tilde{t}} (\tilde{t}-\zeta_1)^{\alpha-1} \zeta_1^{\gamma-1} (H_j(\zeta_1, \beta(\zeta_1)) + \epsilon) d\zeta_1.
\end{align*}
Applying the norm and subtracting the exact integral form $\beta(0) + \mathcal{I}_{FF}^{\alpha, \gamma} H_j(\tilde{t}, \beta(\tilde{t}))$ from both sides:
\begin{align*}
    \left\| \Psi(\tilde{t}) - \left( \beta(0) + \mathcal{I}_{FF}^{\alpha, \gamma} H_j(\tilde{t}, \Psi(\tilde{t})) \right) \right\| &\leq \epsilon \left[ K(\tilde{t}) + L \int_0^{\tilde{t}} (\tilde{t}-\zeta_1)^{\alpha-1} \zeta_1^{\gamma-1} d\zeta_1 \right] \\
    &\leq \epsilon C(\tilde{t}).
\end{align*}
By defining the term as $\Lambda = C(\tilde{t})$, we arrive at:
\begin{equation*}
    \left\| \beta(\tilde{t}) - \left( \beta(0) + \mathcal{I}_{FF}^{\alpha, \gamma} H_j(\tilde{t}, \beta(\tilde{t})) \right) \right\| \leq \Lambda \epsilon.
\end{equation*}
\end{proof}
\begin{theorem}
	The fractal-fractional HIV model \eqref{eq:FF_HIV_model} is Hyers-Ulam stable if the condition $\phi_j \Lambda < 1$ holds for all $j \in \{1, 2, 3, 4, 5\}$, where $\phi_j$ are the Lipschitz constants and $\Lambda$ is the integral bound defined in Theorem \ref{thm:6.1}.
\end{theorem}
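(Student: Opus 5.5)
The plan is the usual Hyers--Ulam argument: compare the approximate solution $\beta$ with the exact solution $\beta^*$ that has the same initial value, $\beta^*(0)=\beta(0)$. By the existence and uniqueness theorems above, under $C(\tilde{t})\phi_j<1$, this $\beta^*$ exists and satisfies the integral form
\begin{equation*}
\beta^*(\tilde{t})=\beta(0)+K(\tilde{t})H_j(\tilde{t},\beta^*(\tilde{t}))+L\int_0^{\tilde{t}}(\tilde{t}-\zeta_1)^{\alpha-1}\zeta_1^{\gamma-1}H_j(\zeta_1,\beta^*(\zeta_1))\,d\zeta_1 .
\end{equation*}
This is the exact counterpart of \eqref{eq:integral_system}. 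The aim is to show $\|\beta-\beta^*\|\le \frac{\Lambda}{1-\phi_j\Lambda}\,\epsilon$, so the Hyers--Ulam constant is $\Lambda/(1-\phi_j\Lambda)$.

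First, split the difference with the triangle inequality:
\begin{equation*}
\|\beta(\tilde{t})-\beta^*(\tilde{t})\|\le \bigl\|\beta(\tilde{t})-\bigl(\beta(0)+\mathcal{I}_{FF}^{\alpha,\gamma}H_j(\tilde{t},\beta(\tilde{t}))\bigr)\bigr\| + \bigl\|\mathcal{I}_{FF}^{\alpha,\gamma}\bigl(H_j(\tilde{t},\beta(\tilde{t}))-H_j(\tilde{t},\beta^*(\tilde{t}))\bigr)\bigr\|.
\end{equation*}
Theorem \ref{thm:6.1} bounds the first term by $\Lambda\epsilon$.

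For the second term, I expand $\mathcal{I}_{FF}^{\alpha,\gamma}$ as $K(\tilde{t})(\cdot)$ plus $L$ times the weighted integral. I then apply the Lipschitz estimates from the first theorem of this section, $\|H_j(\beta)-H_j(\beta^*)\|\le\phi_j\|\beta-\beta^*\|$, and pull the sup norm outside the integral. Using the Beta-function identity $\int_0^{\tilde{t}}(\tilde{t}-\zeta_1)^{\alpha-1}\zeta_1^{\gamma-1}d\zeta_1=\frac{\Gamma(\alpha)\Gamma(\gamma)}{\Gamma(\alpha+\gamma)}\tilde{t}^{\alpha+\gamma-1}$, already used in the uniqueness proof, the second term is at most $C(\tilde{t})\phi_j\|\beta-\beta^*\|=\Lambda\phi_j\|\beta-\beta^*\|$. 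Combining the two bounds gives
\begin{equation*}
\|\beta-\beta^*\|\le\Lambda\epsilon+\phi_j\Lambda\|\beta-\beta^*\|.
\end{equation*}
The hypothesis $\phi_j\Lambda<1$ then lets me absorb the last term, which yields the bound claimed above. Doing this for $j=1,\dots,5$, with the corresponding compartment playing the role of $\beta$, gives HU stability of the whole system.

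The main obstacle is that $\Lambda=C(\tilde{t})$ depends on $\tilde{t}$. To make the constant uniform, I will take $\Lambda=\sup_{\tilde{t}\in[0,b]}C(\tilde{t})$. This supremum is finite provided the $K(\tilde{t})$ term, which behaves like $\tilde{t}^{\gamma-1}$, and the term $\tilde{t}^{\alpha+\gamma-1}$ stay bounded on $[0,b]$. That requires restricting to $\gamma=1$ or $\alpha+\gamma\ge1$ and working away from the singularity, or else interpreting $\Lambda$ as this supremum by assumption, which I will state explicitly.

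A second subtlety is that the Lipschitz constants $\phi_j$ only bound the dependence on the single compartment being varied, with the other compartments held fixed. I will handle this as the paper does: treat each compartment separately, with the other states fixed and bounded by the $A_i$.
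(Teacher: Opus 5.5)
Your proposal is correct and follows the paper's proof. You split $\|\beta-\beta^*\|$ with the triangle inequality, bound the residual by $\Lambda\epsilon$ via Theorem~\ref{thm:6.1}, and bound the integral difference by $C(\tilde{t})\phi_j\|\beta-\beta^*\|=\phi_j\Lambda\|\beta-\beta^*\|$; absorbing that term then gives $C_j=\Lambda/(1-\phi_j\Lambda)$ compartment by compartment. Your added care goes beyond the paper: you fix $\beta^*(0)=\beta(0)$, flag the one-compartment Lipschitz issue, and replace $\Lambda$ by $\sup_{[0,b]}C(\tilde{t})$. Note, though, that this supremum is finite only when $\gamma=1$, since $K(\tilde{t})\sim\tilde{t}^{\gamma-1}$; the condition $\alpha+\gamma\ge 1$ on its own does not suffice.
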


\begin{proof}
	Let $\beta(\tilde{t}) = \{T-1, I_1, V_1, Z_1, Z_a\}$ be the approximate solution of the system satisfying the perturbed inequality, and let $\beta^*(\tilde{t}) = \{T_1^*, I_1^*, V_1^*, Z_1^*, Z_a^*\}$ be the unique exact solution of the system \eqref{eq:FF_HIV_model}.
	
	Starting with the first compartment, $T(\tilde{t})$, we consider the difference:
	\begin{align*}
		\|T_1(\tilde{t}) - T_1^*(\tilde{t})\| &= \left\| T_1(\tilde{t}) - \left( T_(0) + \mathcal{I}_{FF}^{\alpha, \gamma} H_1(\tilde{t}, T_1^*) \right) \right\| \\
		&\leq \left\| T_1(\tilde{t}) - \left( T_1(0) + \mathcal{I}_{FF}^{\alpha, \gamma} H_1(\tilde{t}, T_1) \right) \right\| + \left\| \mathcal{I}_{FF}^{\alpha, \gamma} H_1(\tilde{t}, T_1) - \mathcal{I}_{FF}^{\alpha, \gamma} H_1(\tilde{t}, T_1^*) \right\|.
	\end{align*}
	
	Using the result from Theorem \ref{thm:6.1} for the first term and the Lipschitz condition for the second term:
	\begin{equation*}
		\|T_1(\tilde{t}) - T_1^*(\tilde{t})\| \leq \Lambda \epsilon + C(\tilde{t}) \phi_1 \|T_1(\tilde{t}) - T_1^*(\tilde{t})\|.
	\end{equation*}
	
	Applying the notation $\Lambda = K(\tilde{t}) + \frac{\alpha\gamma \Gamma(\gamma) t^{\alpha+\gamma-1}}{\mathcal{AB(\alpha)}\Gamma(\alpha+\gamma)}$, the inequality becomes:
	\begin{equation*}
		\|T_1(\tilde{t}) - T_1^*(\tilde{t})\| \leq \Lambda \epsilon + \phi_1 \Lambda \|T_1(\tilde{t}) - T_1^*(\tilde{t})\|.
	\end{equation*}
	
	Rearranging the terms to isolate the norm of the difference:
	\begin{equation*}
		\|T_1(\tilde{t}) - T_1^*(\tilde{t})\| (1 - \phi_1 \Lambda) \leq \Lambda \epsilon,
	\end{equation*}
	which leads to:
	\begin{equation}
		\|T_1(\tilde{t}) - T_1^*(\tilde{t})\| \leq \frac{\Lambda}{1 - \phi_1 \Lambda} \epsilon = C_1 \epsilon.
	\end{equation}
	
	By repeating the same logical steps for the remaining compartments $I_1, V_1, Z_1,$ and $Z_a$, we obtain:
	\begin{align*}
		\|I_1(\tilde{t}) - I_1^*(\tilde{t})\| &\leq C_2 \epsilon, \\
		\|V_1(\tilde{t}) - V_1^*(\tilde{t})\| &\leq C_3 \epsilon, \\
		\|Z_1(\tilde{t}) - Z_1^*(\tilde{t})\| &\leq C_4 \epsilon, \\
		\|Z_{a}(\tilde{t}) - Z_a^*(\tilde{t})\| &\leq C_5 \epsilon,
	\end{align*}
	where $C_j = \frac{\Lambda}{1 - \phi_j \Lambda} > 0$. Since all differences are bounded by a constant multiple of $\epsilon$, the proposed fractal-fractional HIV model is Hyers-Ulam stable.
\end{proof}
\section{Numerical Methodology}\label{sec7}
In order to numerically solve the proposed fractal-fractional HIV model \eqref{eq:FF_HIV_model} under the Mittag-Leffler kernel framework, we describe a numerical technique based on a Newton polynomial method \cite{7}. This approach captures non-local historical memory profiles and fractal characteristics simultaneously. For convenience and clarity of the formulation, we rewrite the system \eqref{eq:FF_HIV_model} as follows:
\begin{equation}
\label{eq:functional_form}
\begin{cases}
{}^{FFM}_{0}D_{t}^{\alpha,\gamma} T(\tilde{t}) = \mathcal{F}_1X, \\[6pt]
{}^{FFM}_{0}D_{t}^{\alpha,\gamma} I(\tilde{t}) = \mathcal{F}_2X, \\[6pt]
{}^{FFM}_{0}D_{t}^{\alpha,\gamma} V(\tilde{t}) = \mathcal{F}_3X, \\[6pt]
{}^{FFM}_{0}D_{t}^{\alpha,\gamma} Z(\tilde{t}) = \mathcal{F}_4X, \\[6pt]
{}^{FFM}_{0}D_{t}^{\alpha,\gamma} Z_{a}(\tilde{t}) = \mathcal{F}_5X,
\end{cases}
\end{equation}
\begin{equation*}
X = (\tilde{t}, T, I, V, Z, Z_a)
\end{equation*}
where the nonlinear vector fields representing the transmission dynamics are defined as:
\begin{equation}
\begin{cases}
\mathcal{F}_1 = \lambda_T - (\mu_T + \chi V(\tilde{t}))T(\tilde{t}), \\[6pt]
\mathcal{F}_2 = \chi T(\tilde{t})V(\tilde{t}) - (\mu_I  + \alpha Z_{a}(\tilde{t}))I(\tilde{t}), \\[6pt]
\mathcal{F}_3 = \varepsilon_V \mu_I I(\tilde{t}) - \mu_V V(\tilde{t}), \\[6pt]
\mathcal{F}_4 = \lambda_Z - (\mu_Z  + \beta  I(\tilde{t}))Z(\tilde{t}), \\[6pt]
\mathcal{F}_5 = \beta Z(\tilde{t}) I(\tilde{t}) - \mu_{Z_a} Z_{a}(\tilde{t}).
\end{cases}
\end{equation}
The following results are obtained by using a fractal-fractional integral operator with a non-singular Mittag-Leffler law kernel, transforming the model into its corresponding Volterra integral form at the mesh point $t_{n+1}$:
\begin{equation}
\begin{aligned}
T(t_{n+1}) &= \frac{1-\alpha}{\mathbb{AB}(\alpha)} t_n^{\gamma-1} \mathcal{F}_1(t_n, P^n) + M \sum_{m=2}^{n} \int_{t_m}^{t_{m+1}} \mathcal{F}_1(\tau, P) \tau^{\gamma-1} (t_{n+1} - \tau)^{\alpha-1} d\tau, \\[6pt]
I(t_{n+1}) &= \frac{1-\alpha}{\mathbb{AB}(\alpha)} t_n^{\gamma-1} \mathcal{F}_2(t_n, P^n) + M \sum_{m=2}^{n} \int_{t_m}^{t_{m+1}} \mathcal{F}_2(\tau, P) \tau^{\gamma-1} (t_{n+1} - \tau)^{\alpha-1} d\tau, \\[6pt]
V(t_{n+1}) &= \frac{1-\alpha}{\mathbb{AB}(\alpha)} t_n^{\gamma-1} \mathcal{F}_3(t_n, P^n) + M \sum_{m=2}^{n} \int_{t_m}^{t_{m+1}} \mathcal{F}_3(\tau, P) \tau^{\gamma-1} (t_{n+1} - \tau)^{\alpha-1} d\tau, \\[6pt]
Z(t_{n+1}) &= \frac{1-\alpha}{\mathbb{AB}(\alpha)} t_n^{\gamma-1} \mathcal{F}_4(t_n, P^n) + M \sum_{m=2}^{n} \int_{t_m}^{t_{m+1}} \mathcal{F}_4(\tau, P) \tau^{\gamma-1} (t_{n+1} - \tau)^{\alpha-1} d\tau, \\[6pt]
Z_a(t_{n+1}) &= \frac{1-\alpha}{\mathbb{AB}(\alpha)} t_n^{\gamma-1} \mathcal{F}_5(t_n, P^n) + M \sum_{m=2}^{n} \int_{t_m}^{t_{m+1}} \mathcal{F}_5(\tau, P) \tau^{\gamma-1} (t_{n+1} - \tau)^{\alpha-1} d\tau,
\end{aligned}
\end{equation}
where
\begin{equation*}
    M = \frac{\alpha}{\mathbb{AB}(\alpha)\Gamma(\alpha)}.
\end{equation*}
$\mathbb{AB}(\alpha)$ is the normalization function of the Atangana-Baleanu kernel, and $P = \{T, I, V, Z, Z_a\}$ denotes the state vector layout. With the help of Newton polynomial, we approximate the combined continuous integrand function $\mathcal{G}(\tau, P) = \tau^{\gamma-1}\mathcal{F}_i(\tau, P)$ over the subinterval $[t_m, t_{m+1}]$ utilizing previous history keys $\{t_{m-2}, t_{m-1}, t_m\}$:
\begin{equation}
\begin{aligned}
\mathcal{G}(\tau, P) \simeq & \,\, \mathcal{G}(t_{m-2}, P^{m-2}) + \frac{1}{\Delta t} \left[ \mathcal{G}(t_{m-1}, P^{m-1}) - \mathcal{G}(t_{m-2}, P^{m-2}) \right] (\tau - t_{m-2}) \\
& + \frac{1}{2(\Delta t)^2} \left[ \mathcal{G}(t_m, P^m) - 2\mathcal{G}(t_{m-1}, P^{m-1}) + \mathcal{G}(t_{m-2}, P^{m-2}) \right] (\tau - t_{m-2})(\tau - t_{m-1}).
\end{aligned}
\end{equation}
For the integrals in the above formulation, we perform the following exact analytical integrations:
\begin{equation}
\begin{aligned}
\int_{t_m}^{t_{m+1}} (t_{n+1} - \tau)^{\alpha-1} d\tau &= \frac{(\Delta t)^\alpha}{\alpha} \left[ \Pi_k^n \right], \\[6pt]
\int_{t_m}^{t_{m+1}} (\tau - t_{m-2})(t_{n+1} - \tau)^{\alpha-1} d\tau &= \frac{(\Delta t)^{\alpha+1}}{\alpha(\alpha+1)} \left[ \Sigma_k^n \right], \\[6pt]
\int_{t_m}^{t_{m+1}} (\tau - t_{m-2})(\tau - t_{m-1})(t_{n+1} - \tau)^{\alpha-1} d\tau &= \frac{(\Delta t)^{\alpha+2}}{\alpha(\alpha+1)(\alpha+2)} \left[ \Omega_k^n \right],
\end{aligned}
\end{equation}
where the specialized algebraic weight constants tracking the historical trajectories are evaluated as:
\begin{equation}
\begin{aligned}
\Pi_k^n = & \, (n - m + 1)^\alpha - (n - m)^\alpha, \\
\Sigma_k^n = & \, (n - m + 1)^\alpha (n - m + 3 + 2\alpha) - (n - m)^\alpha (n - m + 3 + 3\alpha), \\
\Omega_k^n = & \, (n - m + 1)^\alpha \left\{ 2(n - m)^2 + (3\alpha + 10)(n - m) + 2\alpha^2 + 9\alpha + 12 \right\} \\
& - (n - m)^\alpha \left\{ 2(n - m)^2 + (5\alpha + 10)(n - m) + 6\alpha^2 + 18\alpha + 12 \right\}.
\end{aligned}
\end{equation}
Replacing the Newton polynomial terms back into the integrated architecture yields the complete multi-step numerical scheme for the healthy target CD4+ T-cell population $T(t_{n+1})$:
\begin{equation}
\begin{aligned}
T(t_{n+1}) = & \, \frac{1-\alpha}{\mathbb{AB}(\alpha)} t_n^{\gamma-1} \mathcal{F}_1(t_n, P^n) + \frac{(\Delta t)^\alpha}{\mathbb{AB}(\alpha)\Gamma(\alpha+1)} \sum_{m=2}^{n} t_{m-2}^{\gamma-1} \mathcal{F}_1(t_{m-2}, P^{m-2}) \times \left[ \Pi_k^n \right] \\
& + \frac{(\Delta t)^\alpha}{\mathbb{AB}(\alpha)\Gamma(\alpha+2)} \sum_{m=2}^{n} \left[ t_{m-1}^{\gamma-1} \mathcal{F}_1(t_{m-1}, P^{m-1}) - t_{m-2}^{\gamma-1} \mathcal{F}_1(t_{m-2}, P^{m-2}) \right] \times \left[ \Sigma_k^n \right] \\
& + \frac{(\Delta t)^\alpha}{2\mathbb{AB}(\alpha)\Gamma(\alpha+3)} \sum_{m=2}^{n} \left[ t_m^{\gamma-1} \mathcal{F}_1(t_m, P^m) - 2t_{m-1}^{\gamma-1} \mathcal{F}_1(t_{m-1}, P^{m-1}) + t_{m-2}^{\gamma-1} \mathcal{F}_1(t_{m-2}, P^{m-2}) \right] \times \left[ \Omega_k^n \right].
\end{aligned}
\end{equation}
By using the same process on the remaining structural compartments of the HIV dynamics, we complete the numerical scheme for computational simulations:
\begin{equation}
\begin{aligned}
I(t_{n+1}) = & \, \frac{1-\alpha}{\mathbb{AB}(\alpha)} t_n^{\gamma-1} \mathcal{F}_2(t_n, P^n) + \frac{(\Delta t)^\alpha}{\mathbb{AB}(\alpha)\Gamma(\alpha+1)} \sum_{m=2}^{n} t_{m-2}^{\gamma-1} \mathcal{F}_2(t_{m-2}, P^{m-2}) \times \left[ \Pi_k^n \right] \\
& + \frac{(\Delta t)^\alpha}{\mathbb{AB}(\alpha)\Gamma(\alpha+2)} \sum_{m=2}^{n} \left[ t_{m-1}^{\gamma-1} \mathcal{F}_2(t_{m-1}, P^{m-1}) - t_{m-2}^{\gamma-1} \mathcal{F}_2(t_{m-2}, P^{m-2}) \right] \times \left[ \Sigma_k^n \right] \\
& + \frac{(\Delta t)^\alpha}{2\mathbb{AB}(\alpha)\Gamma(\alpha+3)} \sum_{m=2}^{n} \left[ t_m^{\gamma-1} \mathcal{F}_2(t_m, P^m) - 2t_{m-1}^{\gamma-1} \mathcal{F}_2(t_{m-1}, P^{m-1}) + t_{m-2}^{\gamma-1} \mathcal{F}_2(t_{m-2}, P^{m-2}) \right] \times \left[ \Omega_k^n \right],
\end{aligned}
\end{equation}

\begin{equation}
\begin{aligned}
V(t_{n+1}) = & \, \frac{1-\alpha}{\mathbb{AB}(\alpha)} t_n^{\gamma-1} \mathcal{F}_3(t_n, P^n) + \frac{(\Delta t)^\alpha}{\mathbb{AB}(\alpha)\Gamma(\alpha+1)} \sum_{m=2}^{n} t_{m-2}^{\gamma-1} \mathcal{F}_3(t_{m-2}, P^{m-2}) \times \left[ \Pi_k^n \right] \\
& + \frac{(\Delta t)^\alpha}{\mathbb{AB}(\alpha)\Gamma(\alpha+2)} \sum_{m=2}^{n} \left[ t_{m-1}^{\gamma-1} \mathcal{F}_3(t_{m-1}, P^{m-1}) - t_{m-2}^{\gamma-1} \mathcal{F}_3(t_{m-2}, P^{m-2}) \right] \times \left[ \Sigma_k^n \right] \\
& + \frac{(\Delta t)^\alpha}{2\mathbb{AB}(\alpha)\Gamma(\alpha+3)} \sum_{m=2}^{n} \left[ t_m^{\gamma-1} \mathcal{F}_3(t_m, P^m) - 2t_{m-1}^{\gamma-1} \mathcal{F}_3(t_{m-1}, P^{m-1}) + t_{m-2}^{\gamma-1} \mathcal{F}_3(t_{m-2}, P^{m-2}) \right] \times \left[ \Omega_k^n \right],
\end{aligned}
\end{equation}

\begin{equation}
\begin{aligned}
Z(t_{n+1}) = & \, \frac{1-\alpha}{\mathbb{AB}(\alpha)} t_n^{\gamma-1} \mathcal{F}_4(t_n, P^n) + \frac{(\Delta t)^\alpha}{\mathbb{AB}(\alpha)\Gamma(\alpha+1)} \sum_{m=2}^{n} t_{m-2}^{\gamma-1} \mathcal{F}_4(t_{m-2}, P^{m-2}) \times \left[ \Pi_k^n \right] \\
& + \frac{(\Delta t)^\alpha}{\mathbb{AB}(\alpha)\Gamma(\alpha+2)} \sum_{m=2}^{n} \left[ t_{m-1}^{\gamma-1} \mathcal{F}_4(t_{m-1}, P^{m-1}) - t_{m-2}^{\gamma-1} \mathcal{F}_4(t_{m-2}, P^{m-2}) \right] \times \left[ \Sigma_k^n \right] \\
& + \frac{(\Delta t)^\alpha}{2\mathbb{AB}(\alpha)\Gamma(\alpha+3)} \sum_{m=2}^{n} \left[ t_m^{\gamma-1} \mathcal{F}_4(t_m, P^m) - 2t_{m-1}^{\gamma-1} \mathcal{F}_4(t_{m-1}, P^{m-1}) + t_{m-2}^{\gamma-1} \mathcal{F}_4(t_{m-2}, P^{m-2}) \right] \times \left[ \Omega_k^n \right],
\end{aligned}
\end{equation}

\begin{equation}
\begin{aligned}
Z_a(t_{n+1}) = & \, \frac{1-\alpha}{\mathbb{AB}(\alpha)} t_n^{\gamma-1} \mathcal{F}_5(t_n, P^n) + \frac{(\Delta t)^\alpha}{\mathbb{AB}(\alpha)\Gamma(\alpha+1)} \sum_{m=2}^{n} t_{m-2}^{\gamma-1} \mathcal{F}_5(t_{m-2}, P^{m-2}) \times \left[ \Pi_k^n \right] \\
& + \frac{(\Delta t)^\alpha}{\mathbb{AB}(\alpha)\Gamma(\alpha+2)} \sum_{m=2}^{n} \left[ t_{m-1}^{\gamma-1} \mathcal{F}_5(t_{m-1}, P^{m-1}) - t_{m-2}^{\gamma-1} \mathcal{F}_5(t_{m-2}, P^{m-2}) \right] \times \left[ \Sigma_k^n \right] \\
& + \frac{(\Delta t)^\alpha}{2\mathbb{AB}(\alpha)\Gamma(\alpha+3)} \sum_{m=2}^{n} \left[ t_m^{\gamma-1} \mathcal{F}_5(t_m, P^m) - 2t_{m-1}^{\gamma-1} \mathcal{F}_5(t_{m-1}, P^{m-1}) + t_{m-2}^{\gamma-1} \mathcal{F}_5(t_{m-2}, P^{m-2}) \right] \times \left[ \Omega_k^n \right].
\end{aligned}
\end{equation}
The formulated loops construct a stable predictive mapping pattern for tracking complex biological transmission dynamics across memory-dependent fractional systems.

\begin{figure}[H]
    \centering
    \begin{subfigure}{0.48\textwidth}
        \centering
        \includegraphics[width=\linewidth]{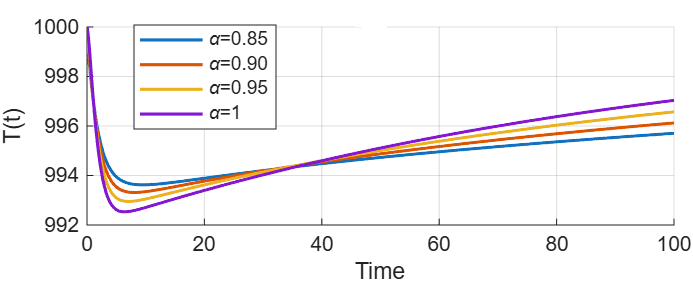}
        \caption{}
    \end{subfigure}
    \hfill 
    \begin{subfigure}{0.48\textwidth}
        \centering
        \includegraphics[width=\linewidth]{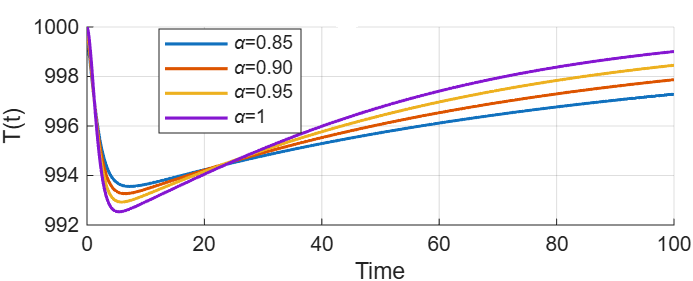}
        \caption{}
    \end{subfigure}
   \caption{Time series of T. (a) When $\gamma = 1$ and for various values of $\alpha$. (b) When $\gamma = 0.8$, and for various values of $\alpha$.} 
\label{fig:T}
\end{figure}
Figure (\ref{fig:T}) show time series of CD4$^{+}$ T-cells $T(\tilde{t})$, where $T(\tilde{t})$ initially decreases due to HIV infection through the interaction term $\chi T V$, and then gradually recovers as a result of continuous recruitment at rate $\lambda_T$ and natural death at rate $\mu_T$. For a fixed fractal dimension, larger values of $\alpha$ lead to faster dynamics and higher long-term levels of $T(\tilde{t})$. Moreover, reducing the fractal dimension from $p=1$ to $p=0.8$ enhances the memory effect, resulting in a slightly quicker recovery and increased susceptible CD4$^{+}$ T-cell levels. These results highlight the significant influence of both the fractional order and fractal dimension on the immune dynamics of the model.
\begin{figure}[H]
    \centering
    \begin{subfigure}{0.48\textwidth}
        \centering
        \includegraphics[width=\linewidth]{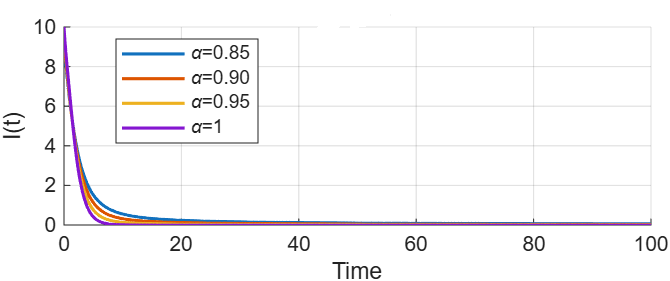}
        \caption{}
    \end{subfigure}
    \hfill 
    \begin{subfigure}{0.48\textwidth}
        \centering
        \includegraphics[width=\linewidth]{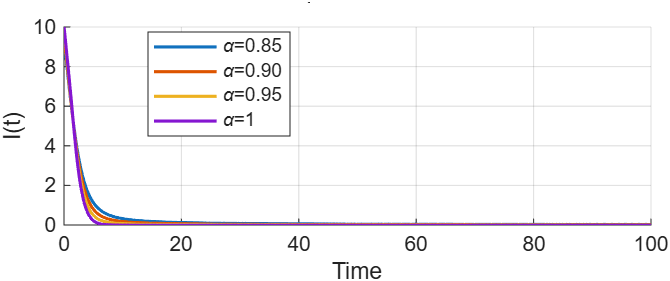}
        \caption{}
    \end{subfigure}
   \caption{Time series of I. (a) When $\gamma = 1$ and for various values of $\alpha$. (b) When $\gamma = 0.8$, and for various values of $\alpha$.} 
\label{fig:I}
\end{figure}
The figure (\ref{fig:I}) show the infected cell population decreases rapidly from its initial level due to the natural death rate $\mu_I$ and the elimination of infected cells by activated CD8$^{+}$ T-cells through the term $\alpha I Z_a$. For a fixed fractal dimension, increasing the fractional order $\alpha$ leads to a faster decay of $I(\tilde{t})$, indicating a quicker clearance of infected cells. Comparing both panels, the case $\gamma=0.8$ exhibits a slightly more pronounced memory effect, resulting in a marginally faster reduction of the infected population.\\
\begin{figure}[H]
    \centering
    \begin{subfigure}{0.48\textwidth}
        \centering
        \includegraphics[width=\linewidth]{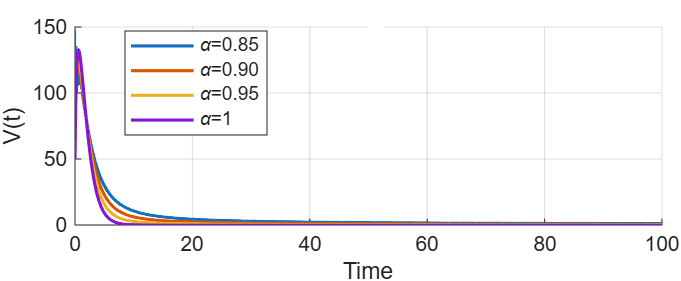}
        \caption{}
    \end{subfigure}
    \hfill 
    \begin{subfigure}{0.48\textwidth}
        \centering
        \includegraphics[width=\linewidth]{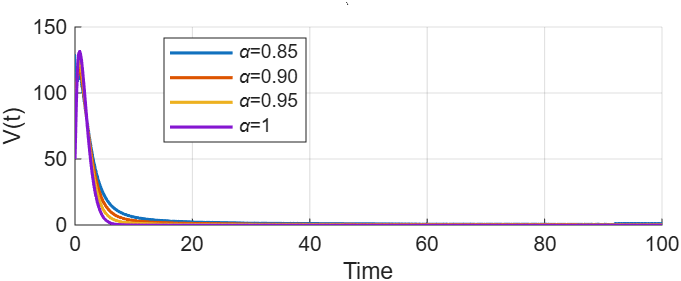}
        \caption{}
    \end{subfigure}
   \caption{Time series of V. (a) When $\gamma = 1$ and for various values of $\alpha$. (b) When $\gamma = 0.8$, and for various values of $\alpha$.} 
\label{fig:V}
\end{figure}
The figure (\ref{fig:V}) show the time evolution of the free HIV virus population $V(\tilde{t})$. The viral load decreases sharply at the early stage due to the high viral clearance rate $\mu_V$, and the decline of infected CD4$^{+}$ T-cells that produce new virions through the term $\varepsilon_V \mu_I I$. For a fixed fractal dimension, larger values of the fractional order $\alpha$ lead to a faster decay of $V(\tilde{t})$, indicating more rapid viral suppression. Comparing the two cases, the model with $\gamma = 0.8$ exhibits a slightly enhanced memory effect, resulting in a quicker reduction of the viral population than in the case $\gamma=1$.\\
Figure (\ref{fig:Z}) the dynamics of the CD8$^{+}$ T-cell population $Z(\tilde{t})$, In both cases, $Z(\tilde{t})$ decreases from its initial value due to natural death at rate $\mu_Z$ and activation into effector cells through the interaction term $\beta Z I$. The higher the value of the fractional order, the higher the rate at which the value of  $Z(\tilde{t})$ decays, indicating less significant memory effects and dynamics more akin to the classical integer-order model. With the fractal dimension decreased to $\gamma=0.8$, the decay of $Z(\tilde{t})$ is a little slower than when p=1, which shows that the fractal structure and the increased memory effects allow the population of $CD8^{+}$ T-cells to persist longer.\\

\begin{figure}[H]
    \centering
    \begin{subfigure}{0.48\textwidth}
        \centering
        \includegraphics[width=\linewidth]{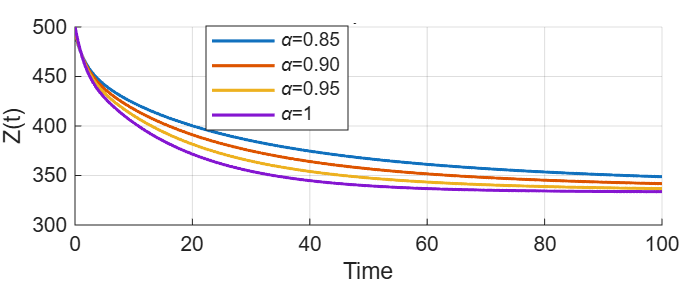}
        \caption{}
    \end{subfigure}
    \hfill 
    \begin{subfigure}{0.48\textwidth}
        \centering
        \includegraphics[width=\linewidth]{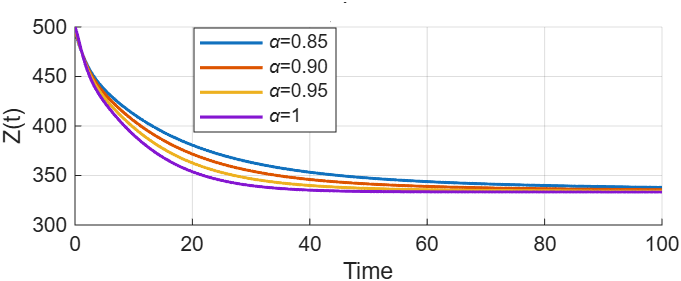}
        \caption{}
    \end{subfigure}
   \caption{Time series of Z. (a) When $\gamma = 1$ and for various values of $\alpha$. (b) When $\gamma = 0.8$, and for various values of $\alpha$.} 
\label{fig:Z}
\end{figure}
\begin{figure}[H]
    \centering
    \begin{subfigure}{0.48\textwidth}
        \centering
        \includegraphics[width=\linewidth]{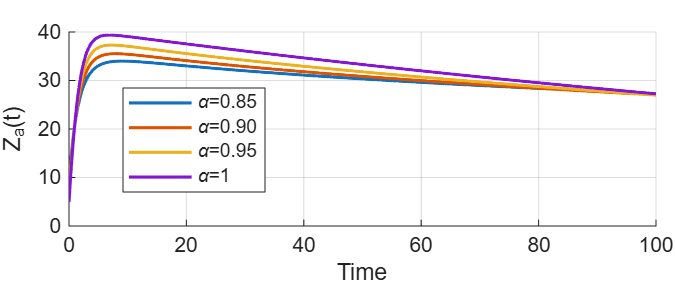}
        \caption{}
    \end{subfigure}
    \hfill 
    \begin{subfigure}{0.48\textwidth}
        \centering
        \includegraphics[width=\linewidth]{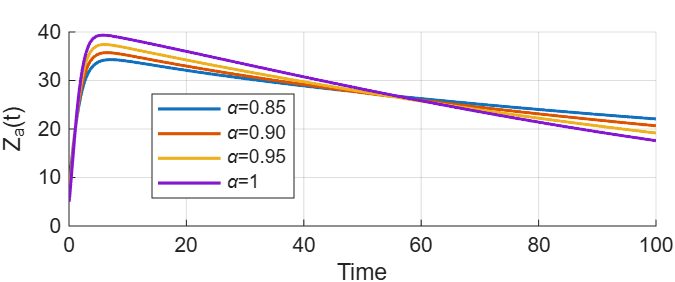}
        \caption{}
    \end{subfigure}
   \caption{Time series of $Z_a$. (a) When $\gamma = 1$ and for various values of $\alpha$. (b) When $\gamma = 0.8$, and for various values of $\alpha$.} 
\label{fig:Z_a}
\end{figure}
Figures (\ref{fig:Z_a} ) shows the time evolution of the activated $CD8^{+}$ T-cell population $Z_{a}(\tilde{t})$. Both scenarios have a steep rise of the $Z_a (\tilde{t})$ as a result of the activation process contained in the term of $beta Z I$ and a slow fall as a result of the natural decay rate of $Z a$ which is $mu Z a$. The larger the fractional orders $alpha$, the larger are the peak values and the faster the post-peak decay, and the less memory effects and the closer to the classical model. As the fractal dimension is reduced to $\gamma =0.8$, the memory action is increased, resulting in a smoother evolution and a more prolonged activated immune response than when $\gamma =1$. \\
\begin{figure}[h!]
    \centering
    \begin{subfigure}[b]{0.48\textwidth}
        \centering
        \includegraphics[width=\textwidth]{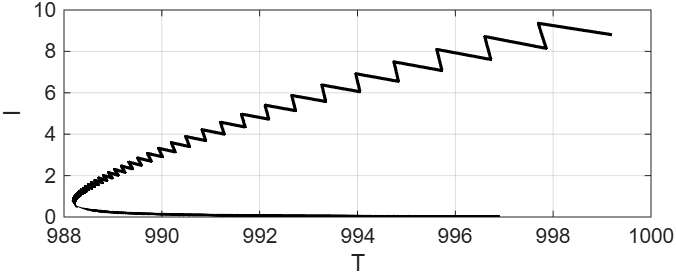}
        \caption{}
       \label{fig:ch1} 
    \end{subfigure}
    \hfill
    \begin{subfigure}[b]{0.48\textwidth}
        \centering
        \includegraphics[width=\textwidth]{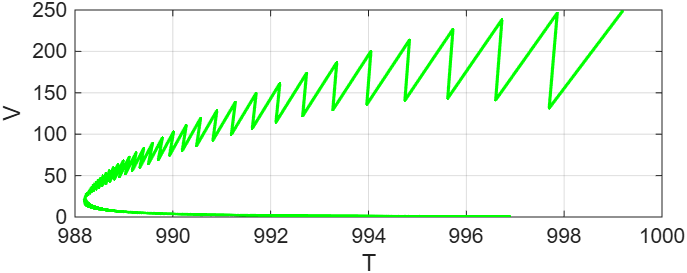}
        \caption{}
        \label{fig:ch2}
    \end{subfigure}
    \hfill
    \begin{subfigure}[b]{0.48\textwidth}
        \centering
        \includegraphics[width=\textwidth]{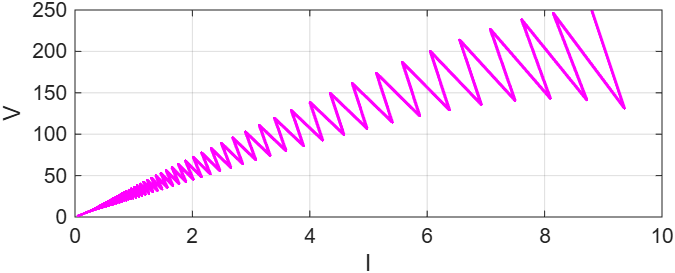}
        \caption{}
        \label{fig:ch3}
    \end{subfigure}
    \begin{subfigure}[b]{0.48\textwidth}
        \centering
        \includegraphics[width=\textwidth]{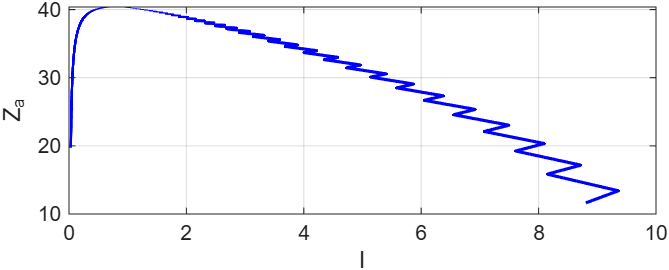}
        \caption{}
        \label{fig:ch5}
    \end{subfigure}
    \hfill
    \begin{subfigure}[b]{0.48\textwidth}
        \centering
        \includegraphics[width=\textwidth]{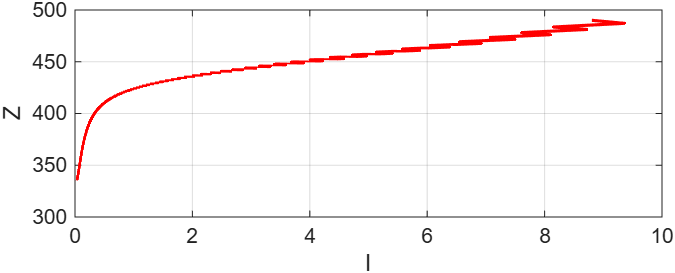}
        \caption{}
        \label{fig:ch6}
    \end{subfigure}
    \hfill
    \begin{subfigure}[b]{0.48\textwidth}
        \centering
        \includegraphics[width=\textwidth]{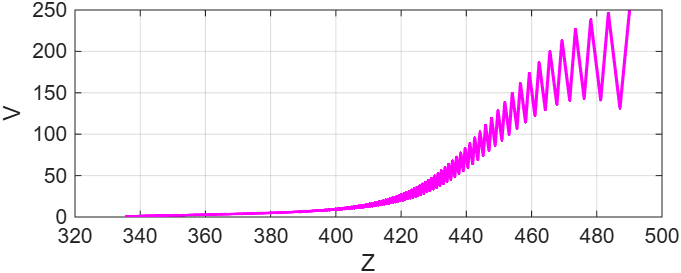}
        \caption{}
        \label{fig:ch7}
    \end{subfigure}
    \begin{subfigure}[b]{0.48\textwidth}
        \centering
        \includegraphics[width=\textwidth]{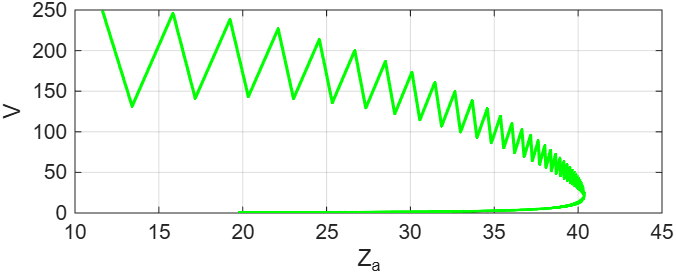}
        \caption{}
        \label{fig:ch8}
    \end{subfigure}
    \hfill
    \begin{subfigure}[b]{0.48\textwidth}
        \centering
        \includegraphics[width=\textwidth]{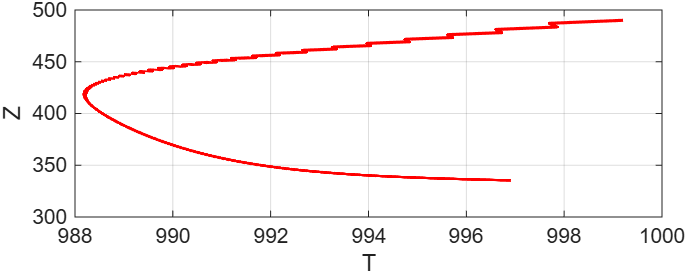}
        \caption{}
        \label{fig:ch9}
    \end{subfigure}
    \hfill
    \begin{subfigure}[b]{0.48\textwidth}
        \centering
        \includegraphics[width=\textwidth]{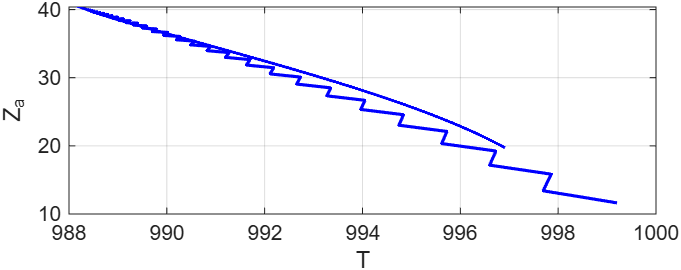}
        \caption{}
        \label{fig:ch10}
    \end{subfigure}
    
    \caption{Two-dimensional phase portraits of the fractal--fractional HIV model for $\alpha=0.9$, and $\gamma=0.9$ illustrating the nonlinear interactions among CD4$^{+}$ T-cells, infected cells, viral particles, CD8$^{+}$ T-cells, and activated immune cells.}
    \label{fig:nine_figs}
\end{figure}
\begin{figure}[H]
    \centering
    \begin{subfigure}[height = 15cm]{0.48\textwidth}
        \centering
        \includegraphics[width=\textwidth]{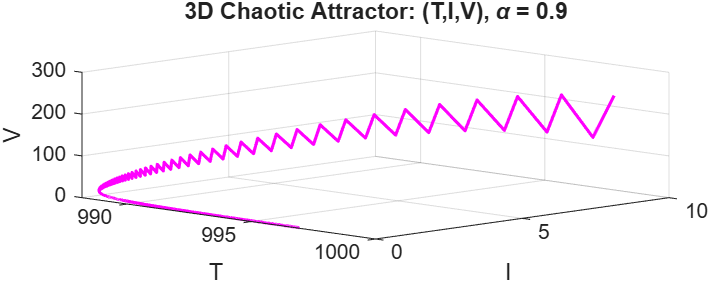}
        \caption{}
        \label{fig:3d1}
    \end{subfigure}
    \hfill
    \begin{subfigure}[height = 15cm]{0.48\textwidth}
        \centering
        \includegraphics[width=\textwidth]{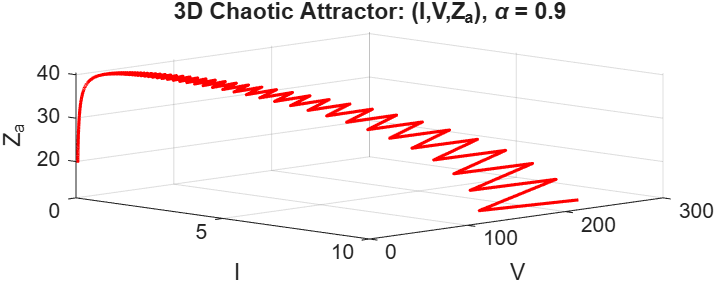}
        \caption{}
        \label{fig:3d2}
    \end{subfigure}
    \hspace{1cm}
    \hfill
    \begin{subfigure}[height = 15cm]{0.48\textwidth}
        \centering
        \includegraphics[width=\textwidth]{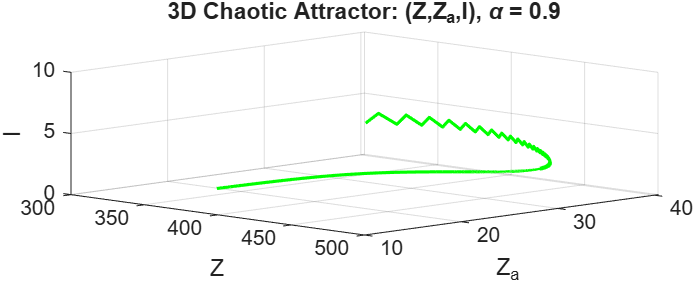}
        \caption{}
        \label{fig:3d3}
    \end{subfigure}
\caption{Three-dimensional chaotic attractors of the fractal--fractional HIV infection model for $\alpha=0.9$.}
\label{}
\end{figure}
\begin{figure}[h!]
    \centering
    \begin{subfigure}[height = 20cm]{0.48\textwidth}
        \centering
        \includegraphics[width=\textwidth]{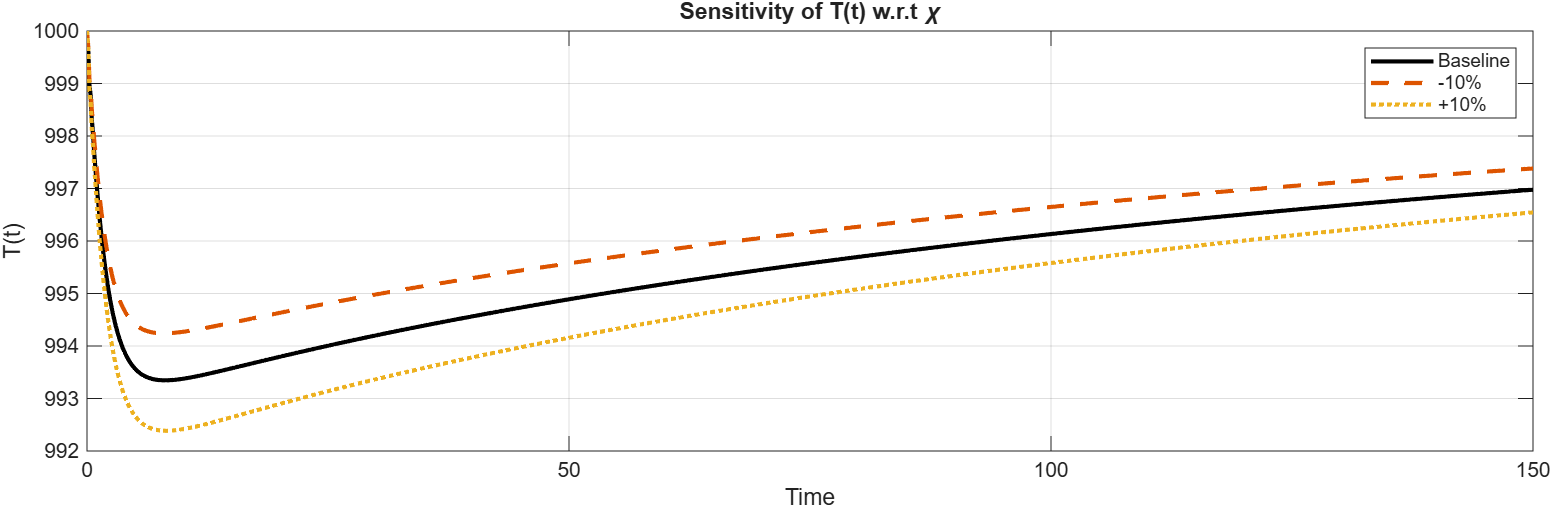}
        \caption{}
        \label{fig:03}
    \end{subfigure}
    \hfill
    \begin{subfigure}[height = 20cm]{0.48\textwidth}
        \centering
        \includegraphics[width=\textwidth]{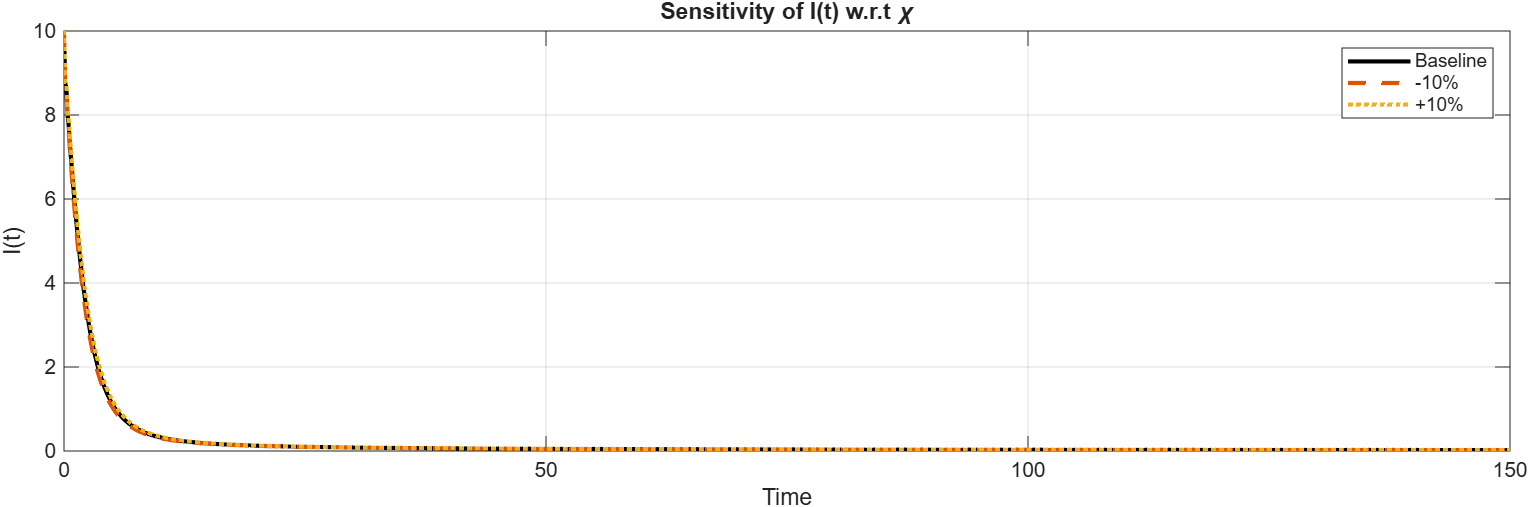}
        \caption{}
        \label{fig:04}
    \end{subfigure}
\begin{subfigure}[height = 20cm]{0.48\textwidth}
        \centering
        \includegraphics[width=\textwidth]{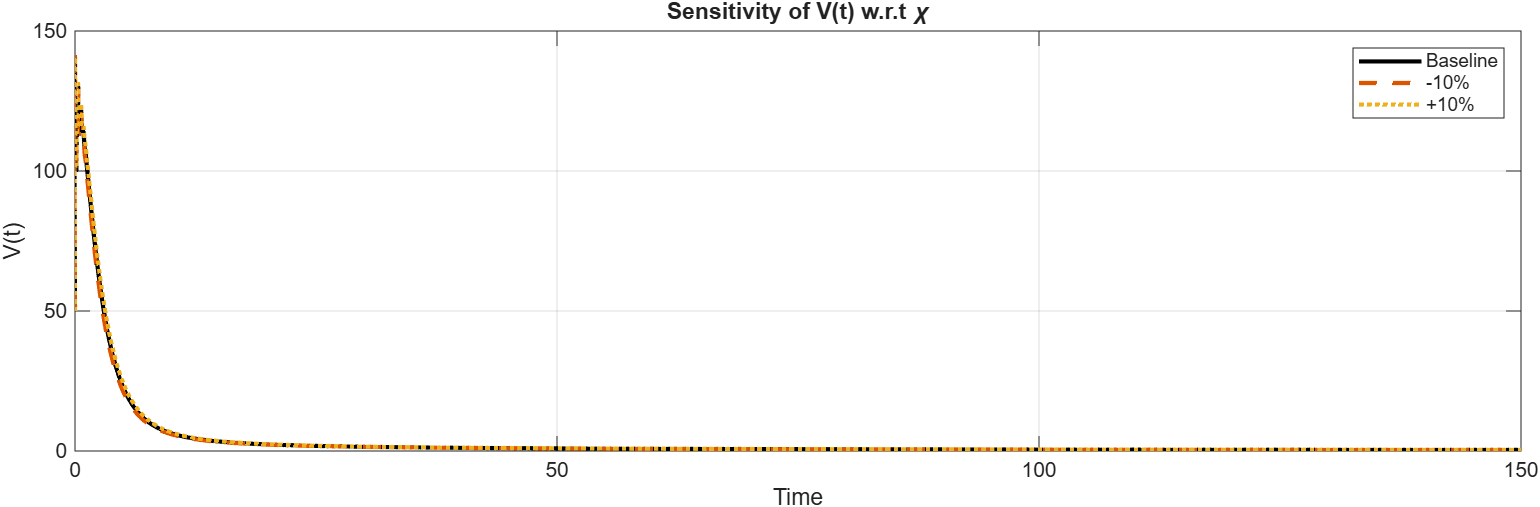}
        \caption{}
        \label{fig:05}
    \end{subfigure}
\begin{subfigure}[height = 20cm]{0.48\textwidth}
        \centering
        \includegraphics[width=\textwidth]{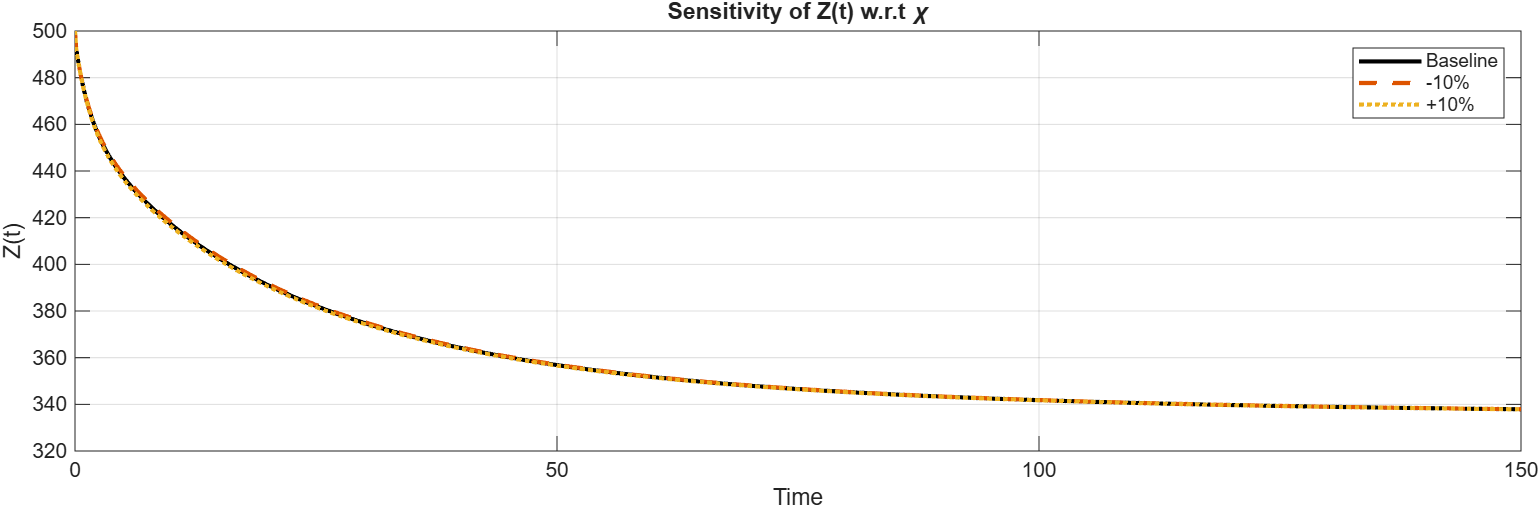}
        \caption{}
        \label{fig:06}
    \end{subfigure}
\begin{subfigure}[height = 20cm]{0.48\textwidth}
        \centering
        \includegraphics[width=\textwidth]{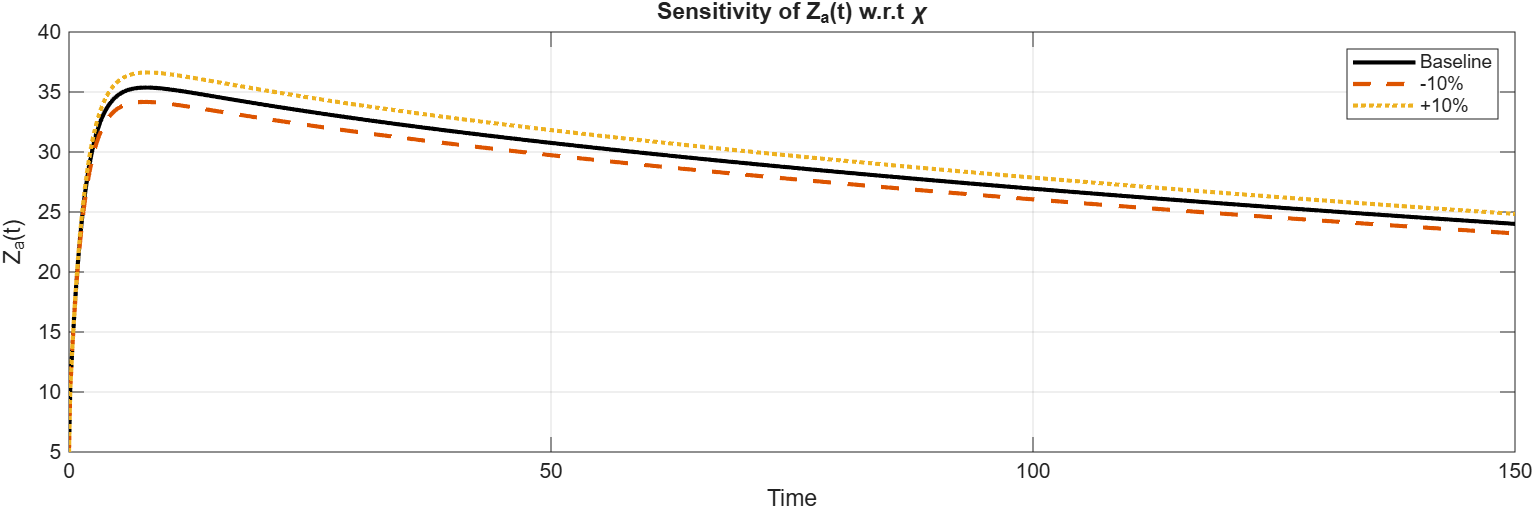}
        \caption{}
        \label{fig:07}
    \end{subfigure}
\caption{Trajectory-based sensitivity analysis of T-cells $T(\tilde{t})$ under $\pm 10\%$ perturbations of key parameters.}
\label{fig:9}
\end{figure}
\begin{figure}[h!]
    \centering
    \begin{subfigure}[height = 20cm]{0.48\textwidth}
        \centering
        \includegraphics[width=\textwidth]{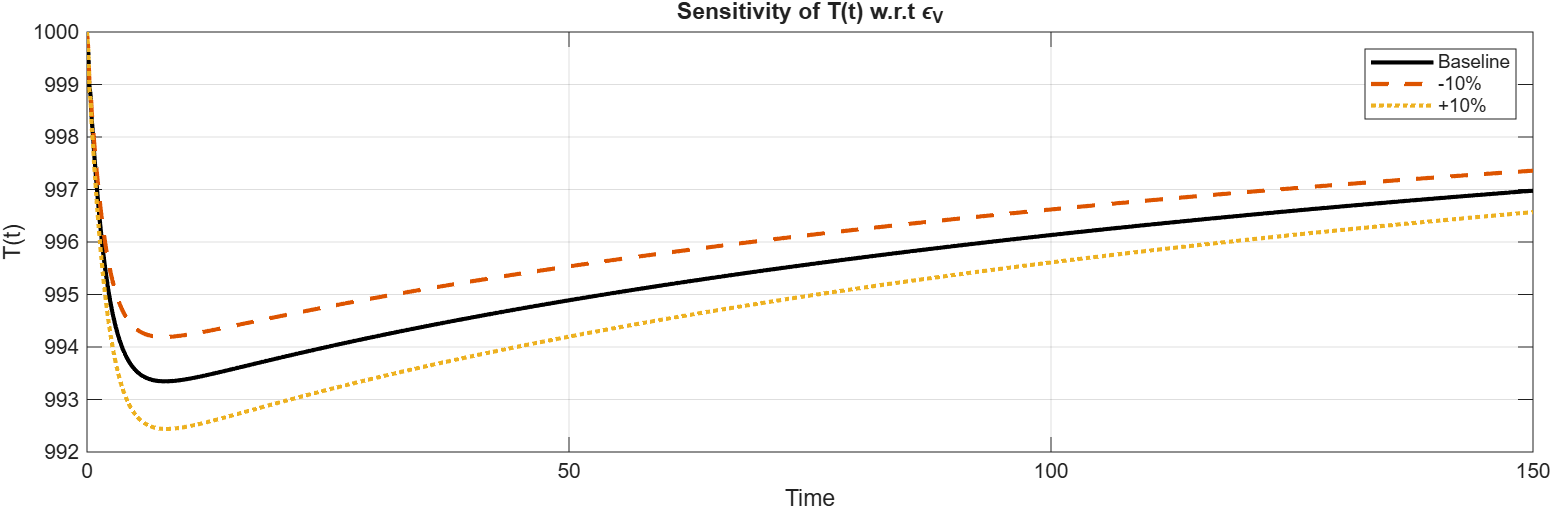}
        \caption{}
        \label{fig:08}
    \end{subfigure}
    \hfill
    \begin{subfigure}[height = 20cm]{0.48\textwidth}
        \centering
        \includegraphics[width=\textwidth]{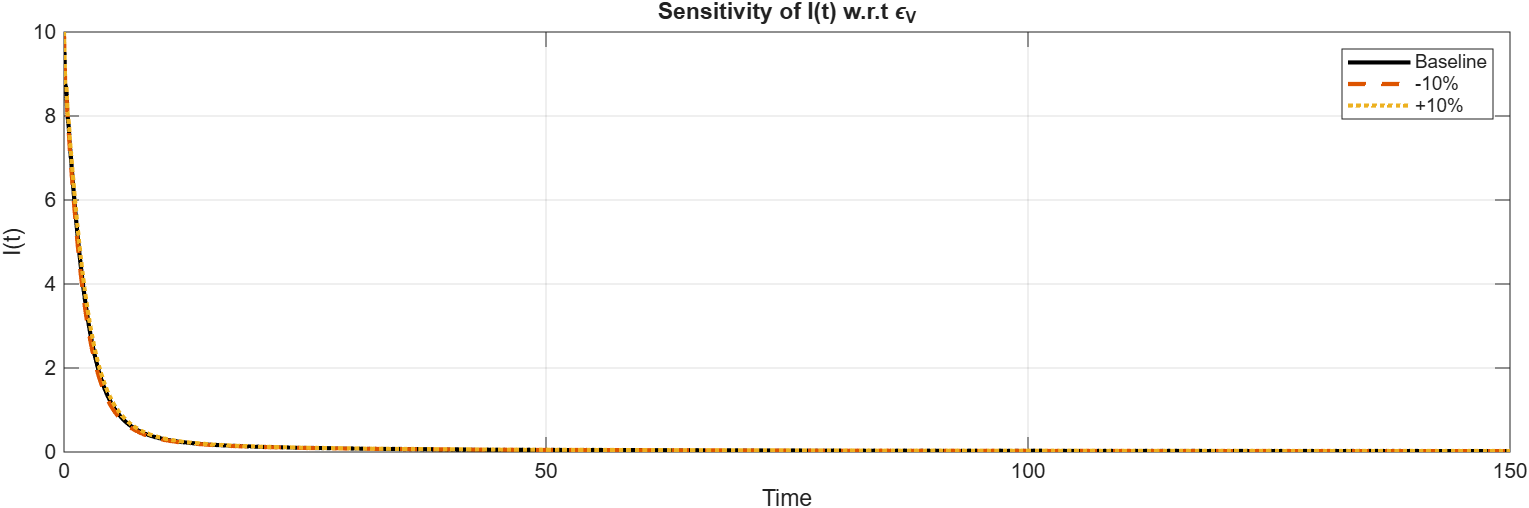}
        \caption{}
        \label{fig:09}
    \end{subfigure}
\begin{subfigure}[height = 20cm]{0.48\textwidth}
        \centering
        \includegraphics[width=\textwidth]{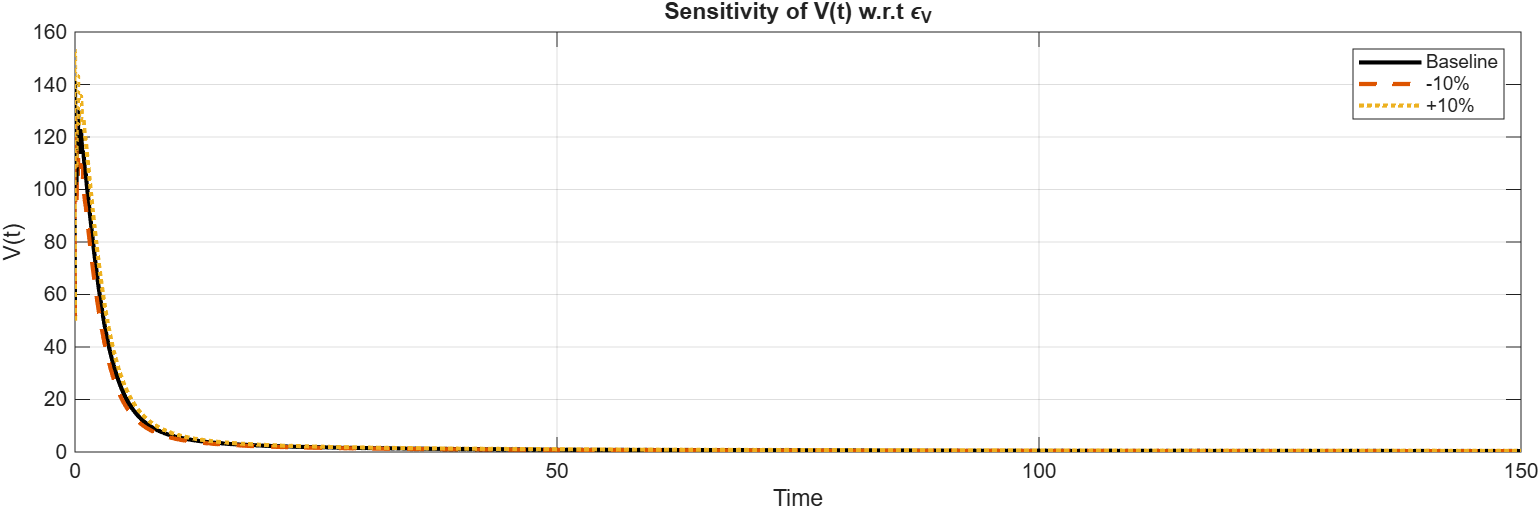}
        \caption{}
        \label{fig:010}
    \end{subfigure}
\begin{subfigure}[height = 20cm]{0.48\textwidth}
        \centering
        \includegraphics[width=\textwidth]{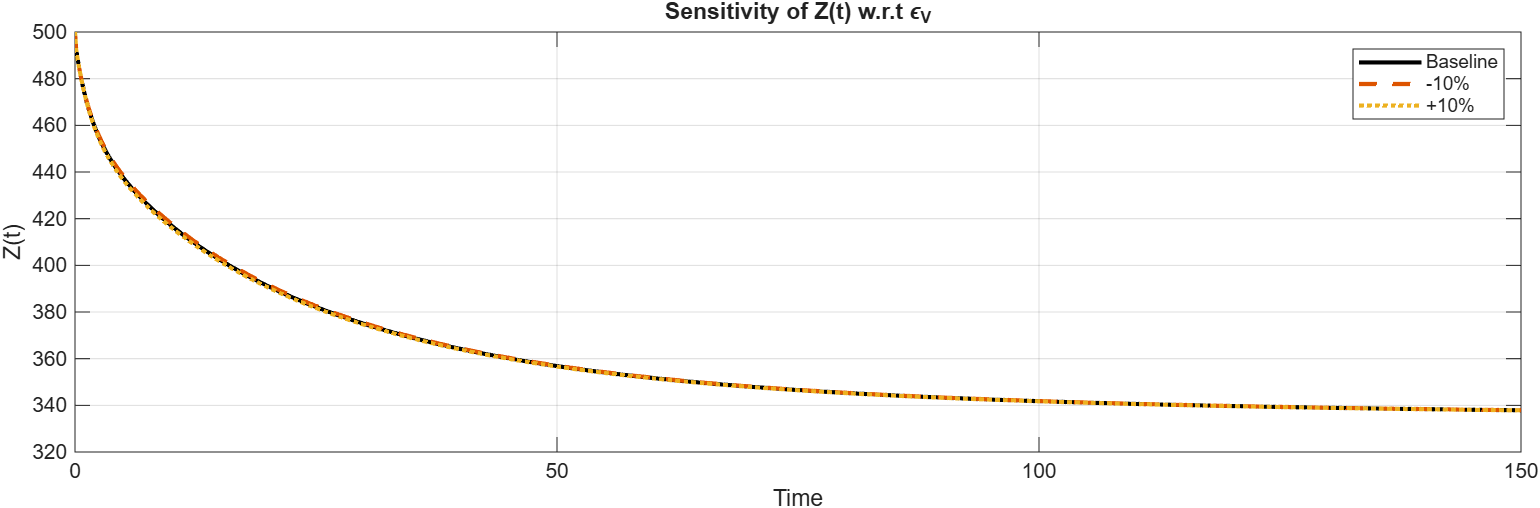}
        \caption{}
        \label{fig:011}
    \end{subfigure}
\begin{subfigure}[height = 20cm]{0.48\textwidth}
        \centering
        \includegraphics[width=\textwidth]{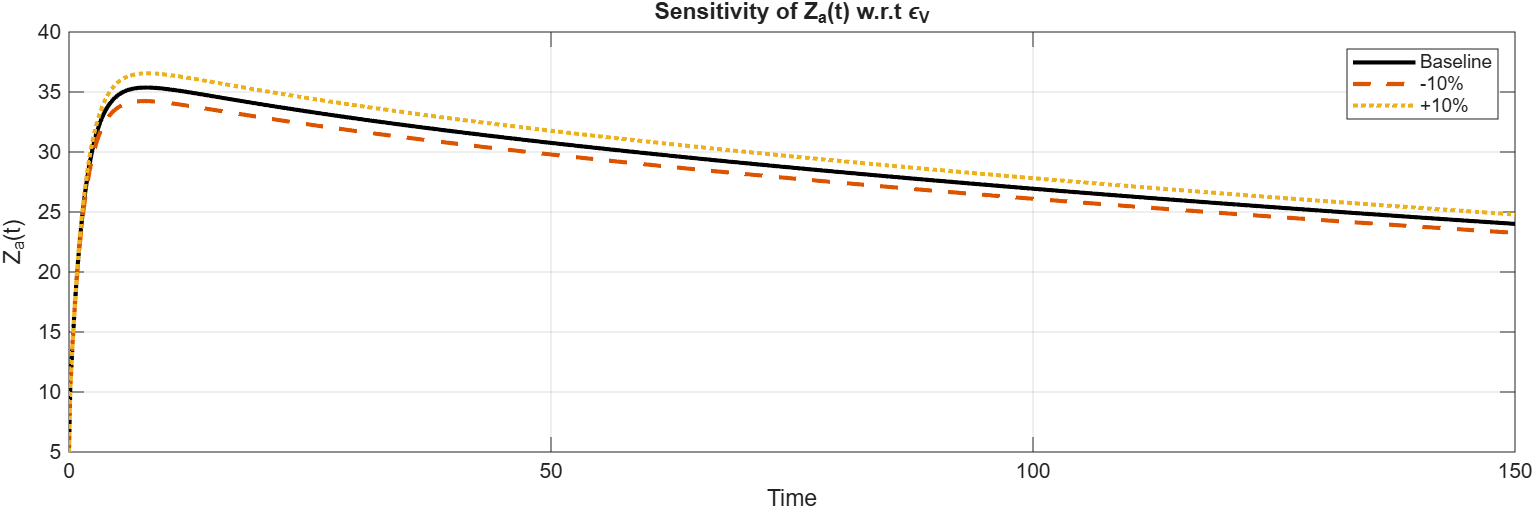}
        \caption{}
        \label{fig:012}
    \end{subfigure}
\caption{Trajectory-based sensitivity analysis of the infected CD4$^{+}$ T-cells $I(\tilde{t})$ under $\pm 10\%$ parameter variations.}
\label{fig:10}
\end{figure}
\begin{figure}[h!]
    \centering
    \begin{subfigure}[height = 20cm]{0.48\textwidth}
        \centering
        \includegraphics[width=\textwidth]{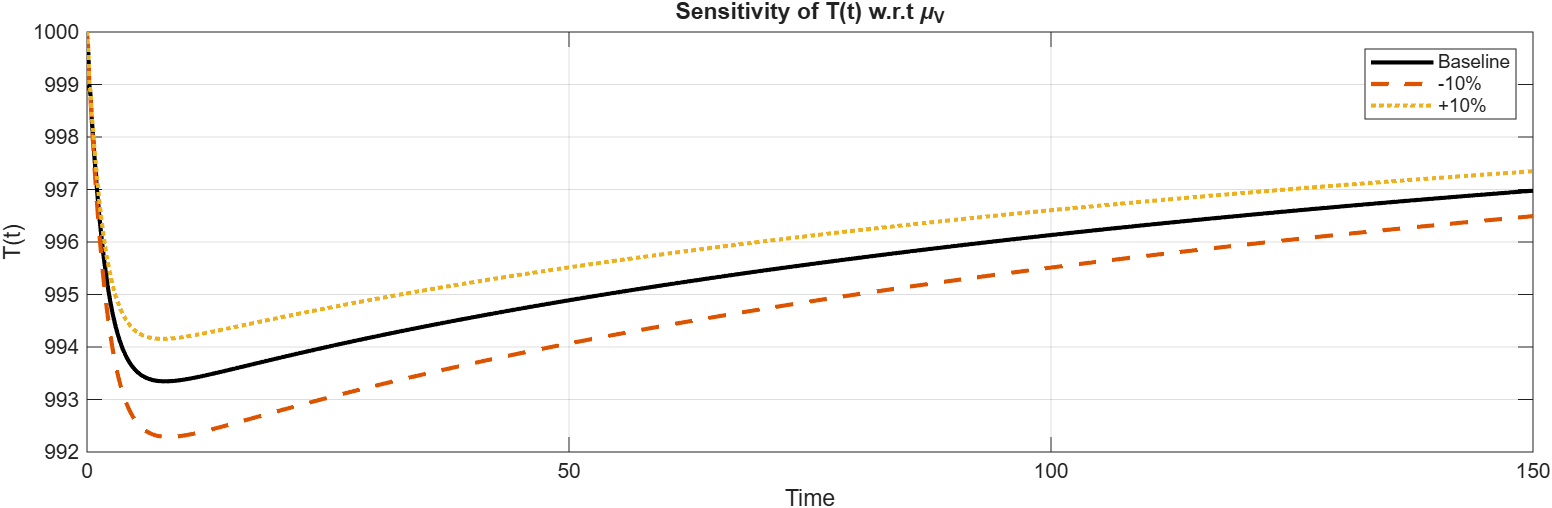}
        \caption{}
        \label{fig:013}
    \end{subfigure}
    \hfill
    \begin{subfigure}[height = 20cm]{0.48\textwidth}
        \centering
        \includegraphics[width=\textwidth]{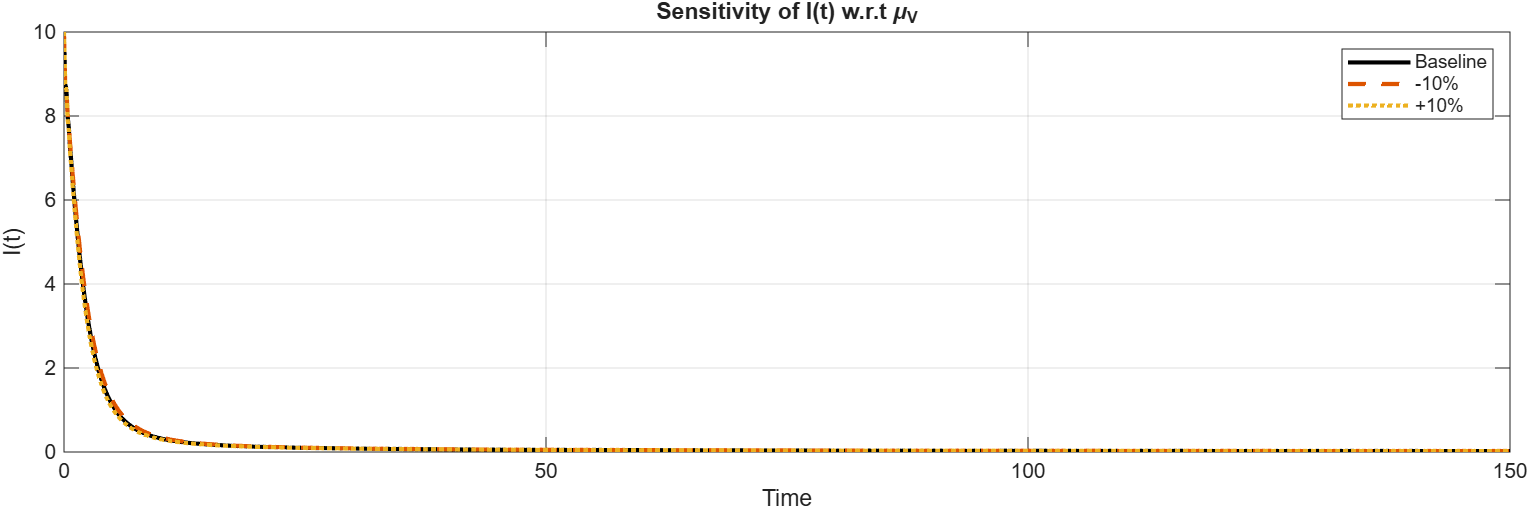}
        \caption{}
        \label{fig:014}
    \end{subfigure}
\begin{subfigure}[height = 20cm]{0.48\textwidth}
        \centering
        \includegraphics[width=\textwidth]{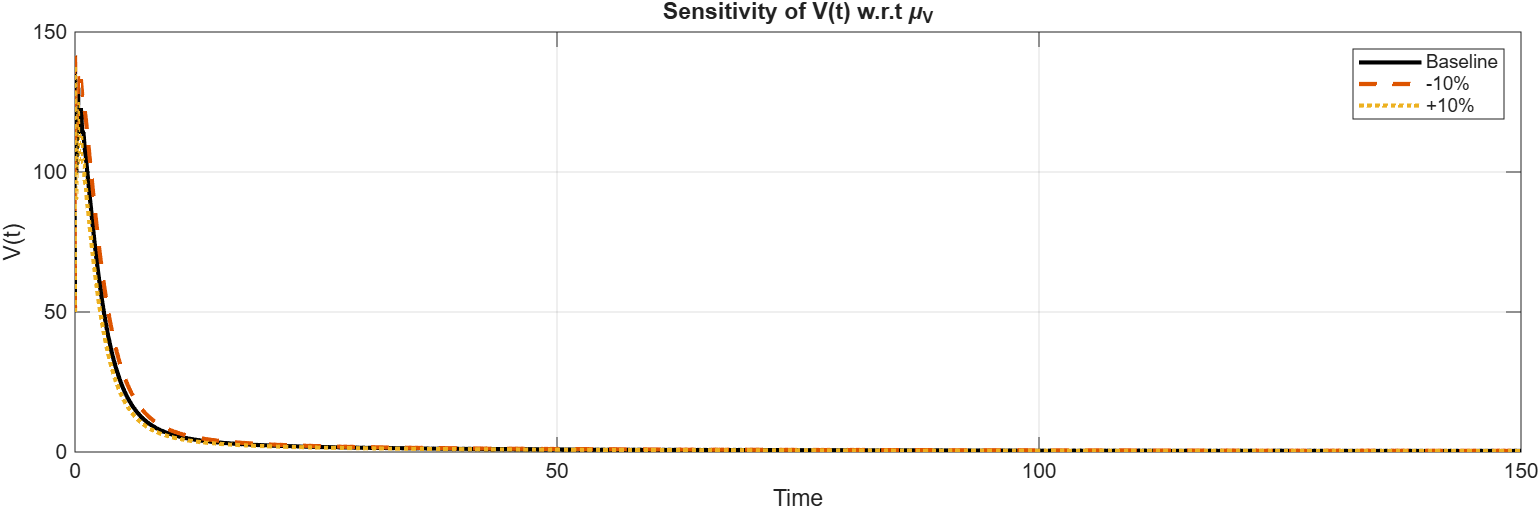}
        \caption{}
        \label{fig:015}
    \end{subfigure}
\begin{subfigure}[height = 20cm]{0.48\textwidth}
        \centering
        \includegraphics[width=\textwidth]{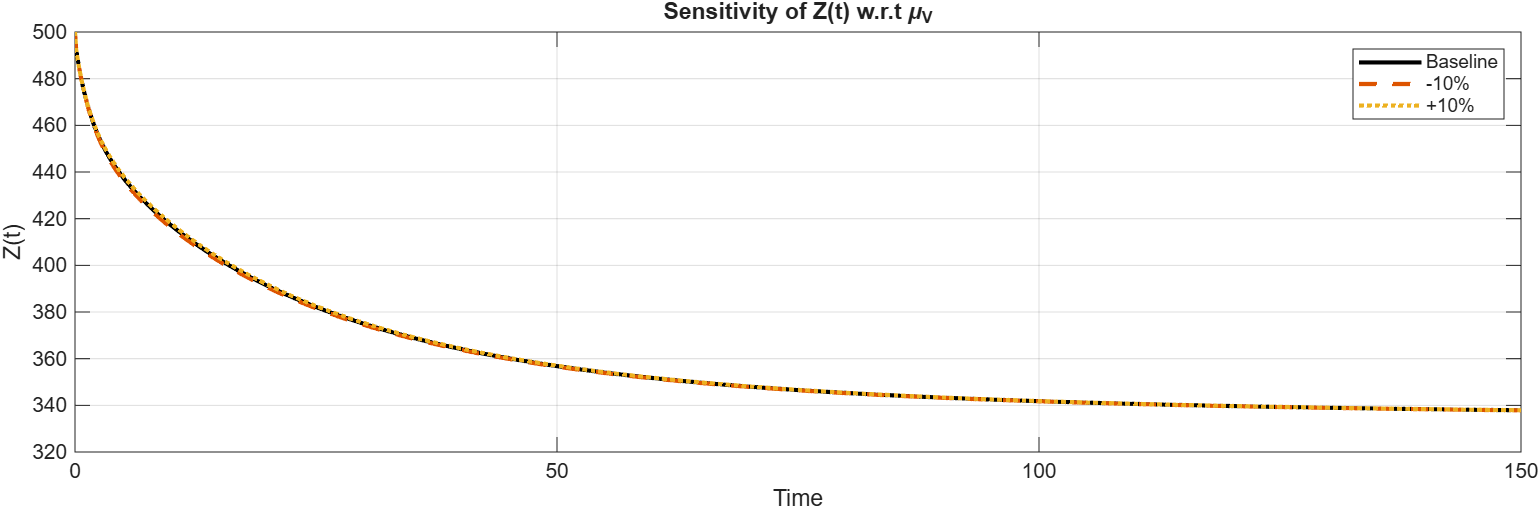}
        \caption{}
        \label{fig:016}
    \end{subfigure}
\begin{subfigure}[height = 20cm]{0.48\textwidth}
        \centering
        \includegraphics[width=\textwidth]{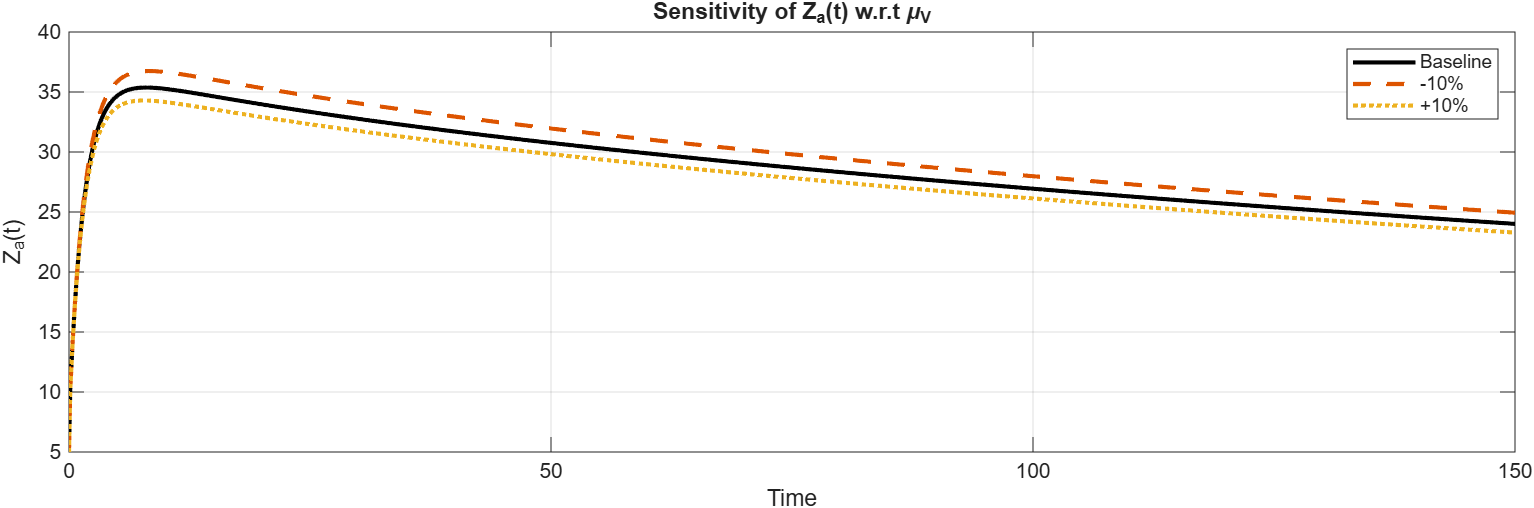}
        \caption{}
        \label{fig:017}
    \end{subfigure}
\caption{Trajectory-based sensitivity analysis of the viral load $V(\tilde{t})$ under $\pm 10\%$ perturbations in model parameters.}
\label{fig:11}
\end{figure}
\begin{figure}[H]
    \centering
    \begin{subfigure}[height = 20cm]{0.48\textwidth}
        \centering
        \includegraphics[width=\textwidth]{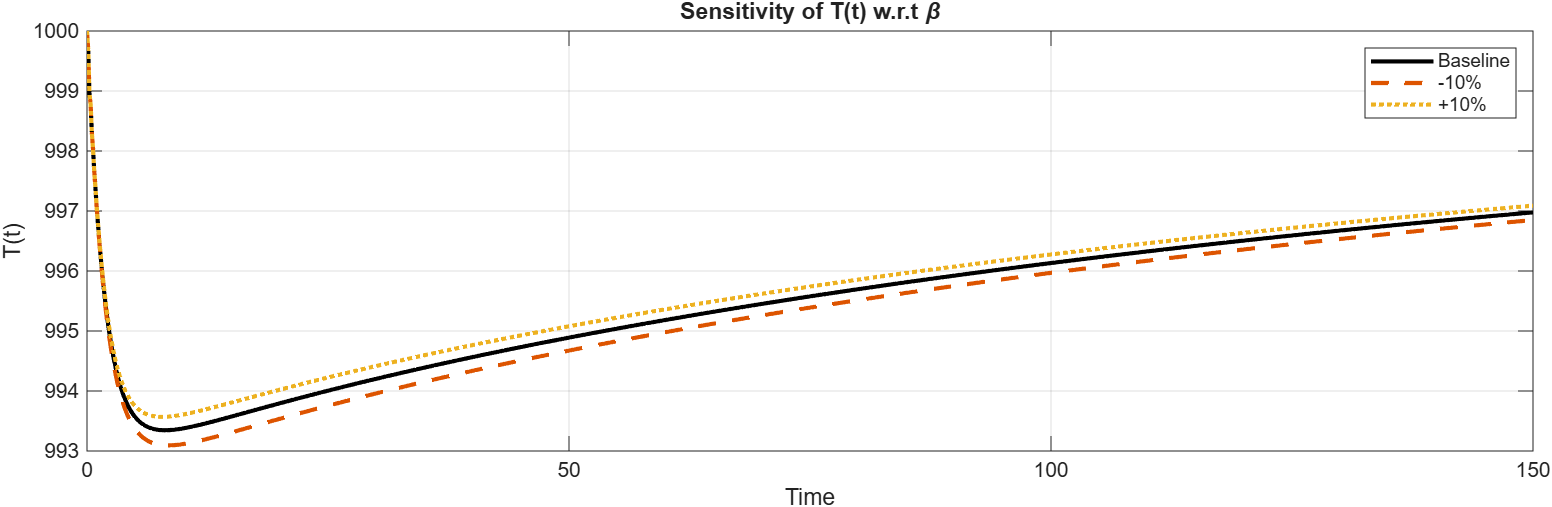}
        \caption{}
        \label{fig:018}
    \end{subfigure}
    \hfill
    \begin{subfigure}[height = 20cm]{0.48\textwidth}
        \centering
        \includegraphics[width=\textwidth]{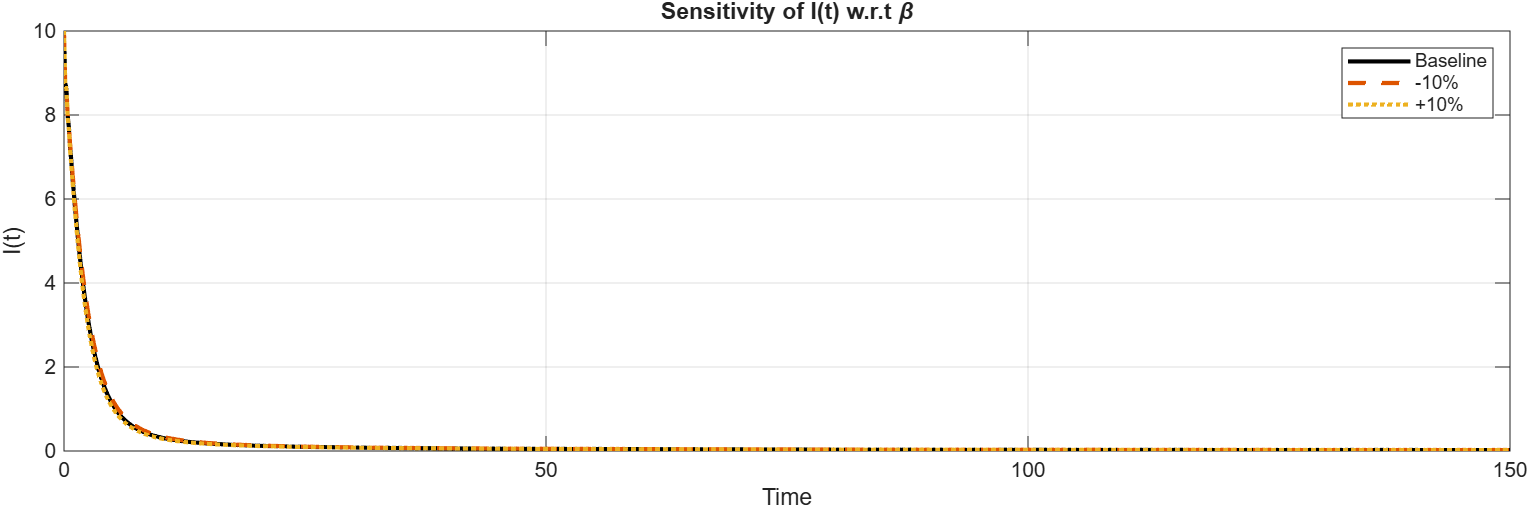}
        \caption{}
        \label{fig:019}
    \end{subfigure}
\begin{subfigure}[height = 20cm]{0.48\textwidth}
        \centering
        \includegraphics[width=\textwidth]{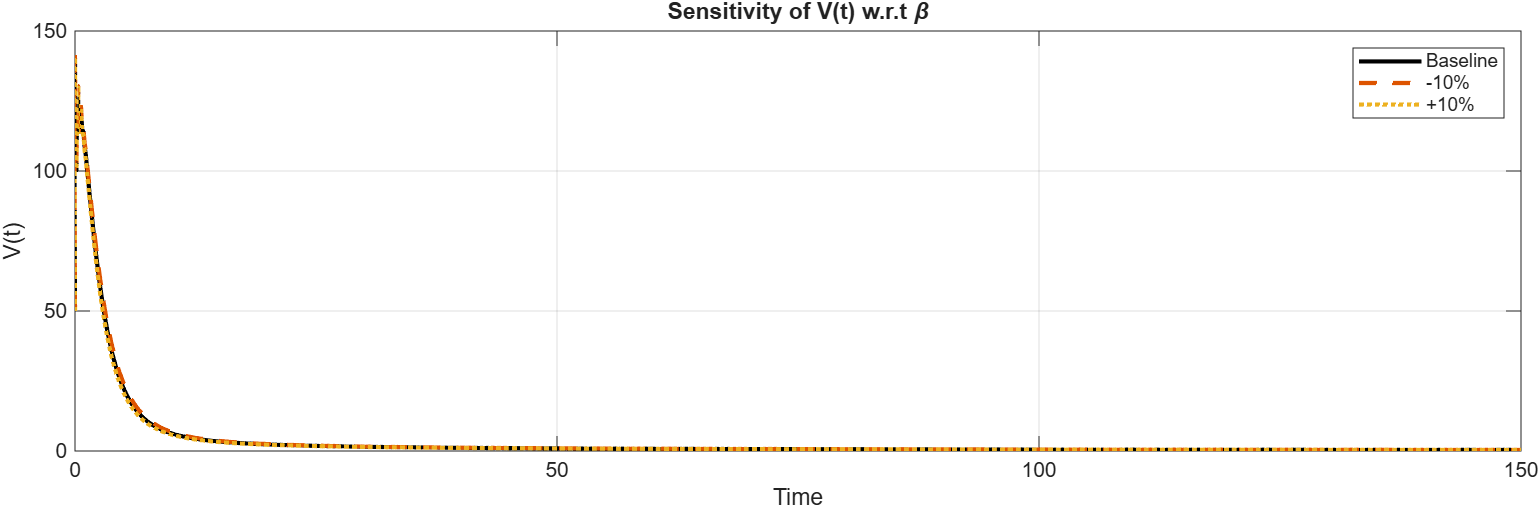}
        \caption{}
        \label{fig:020}
    \end{subfigure}
\begin{subfigure}[height = 20cm]{0.48\textwidth}
        \centering
        \includegraphics[width=\textwidth]{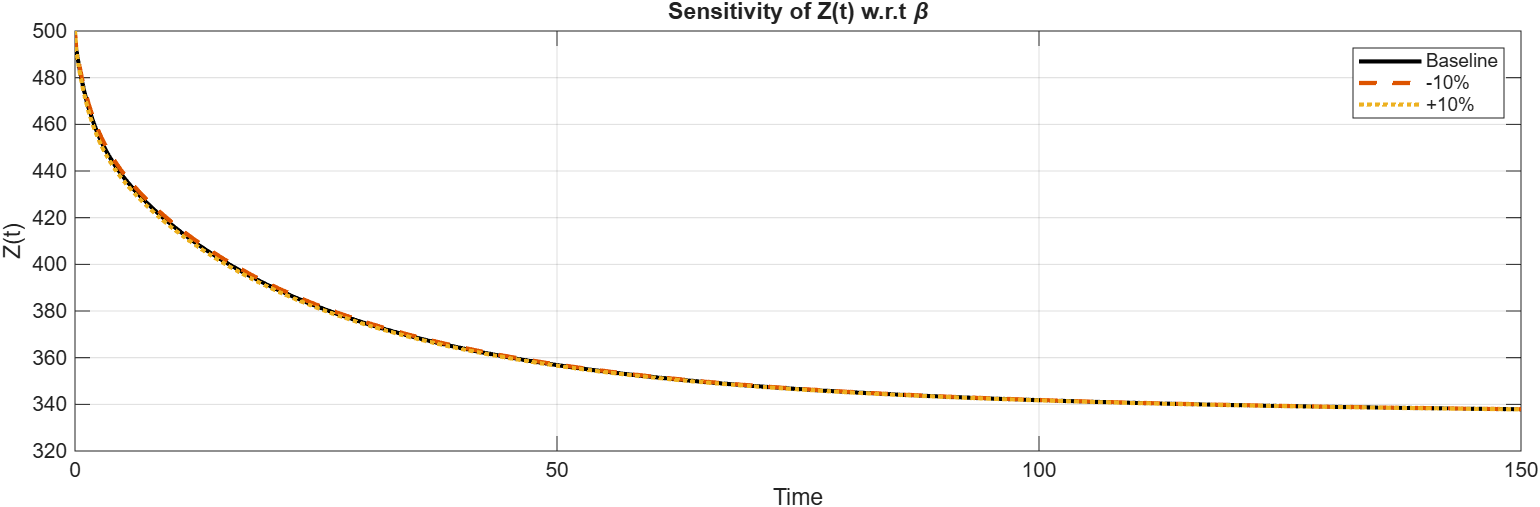}
        \caption{}
        \label{fig:021}
    \end{subfigure}
\begin{subfigure}[height = 20cm]{0.48\textwidth}
        \centering
        \includegraphics[width=\textwidth]{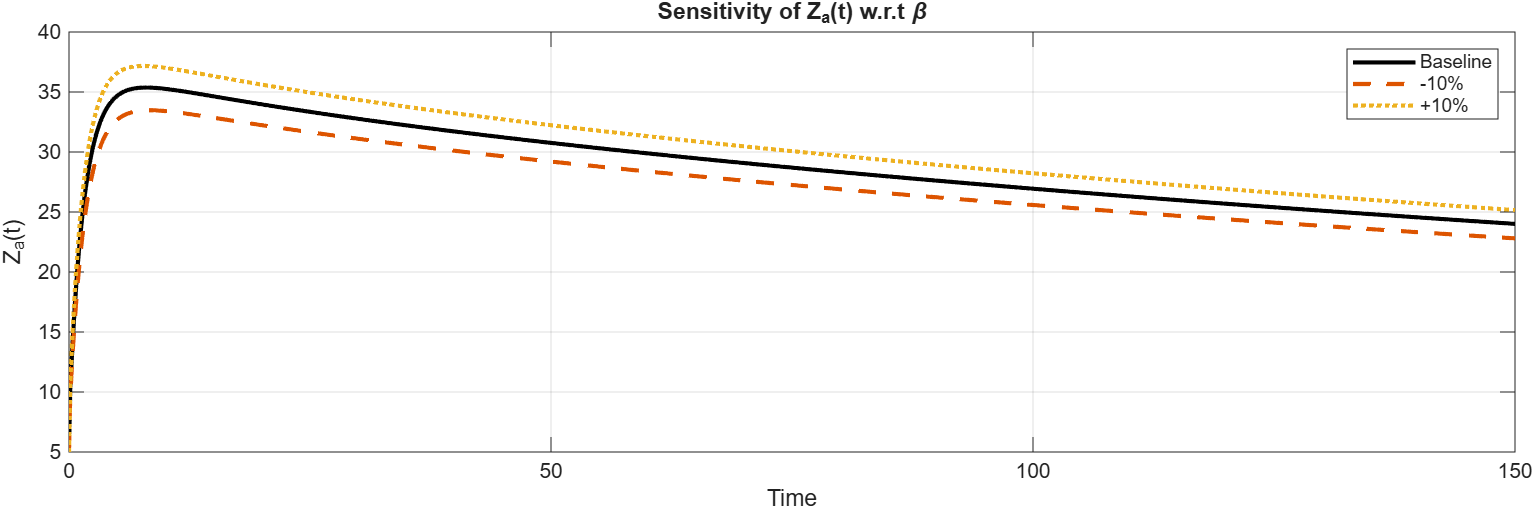}
        \caption{}
        \label{fig:022}
    \end{subfigure}
\caption{Trajectory-based sensitivity analysis of CD8$^{+}$ T-cells $Z(\tilde{t})$ under $\pm 10\%$ variations in key parameters.}
\label{fig:12}
\end{figure}
\begin{figure}[H]
    \centering
    \begin{subfigure}[height = 20cm]{0.48\textwidth}
        \centering
        \includegraphics[width=\textwidth]{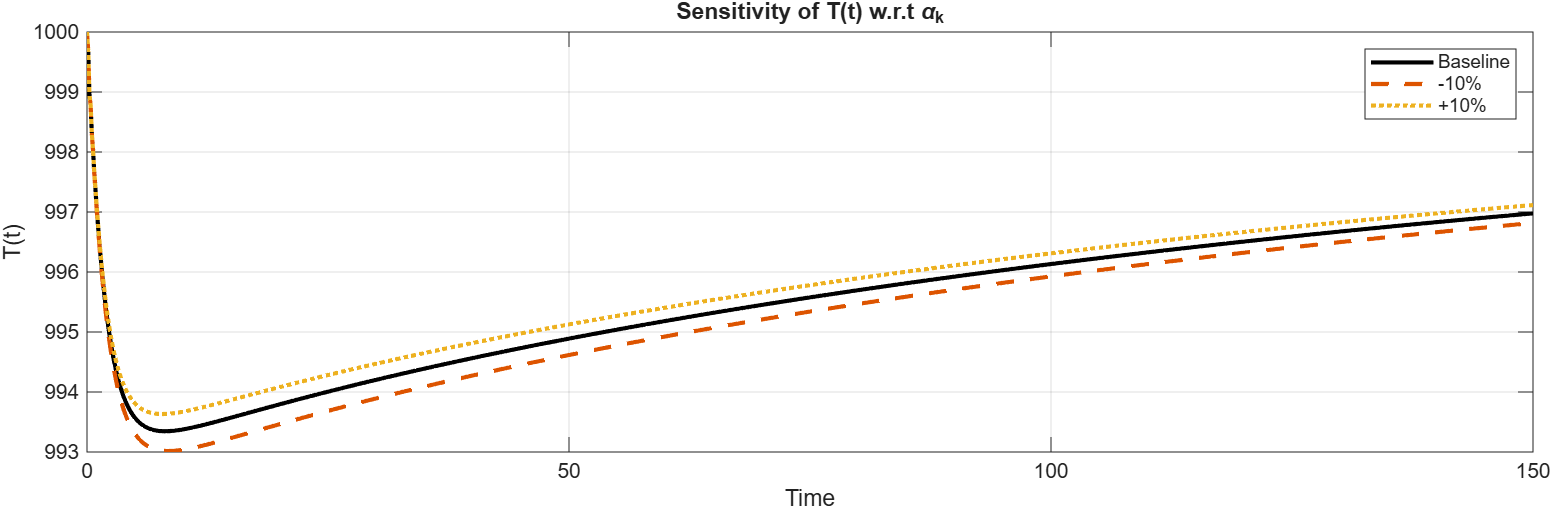}
        \caption{}
        \label{fig:023}
    \end{subfigure}
    \hfill
    \begin{subfigure}[height = 20cm]{0.48\textwidth}
        \centering
        \includegraphics[width=\textwidth]{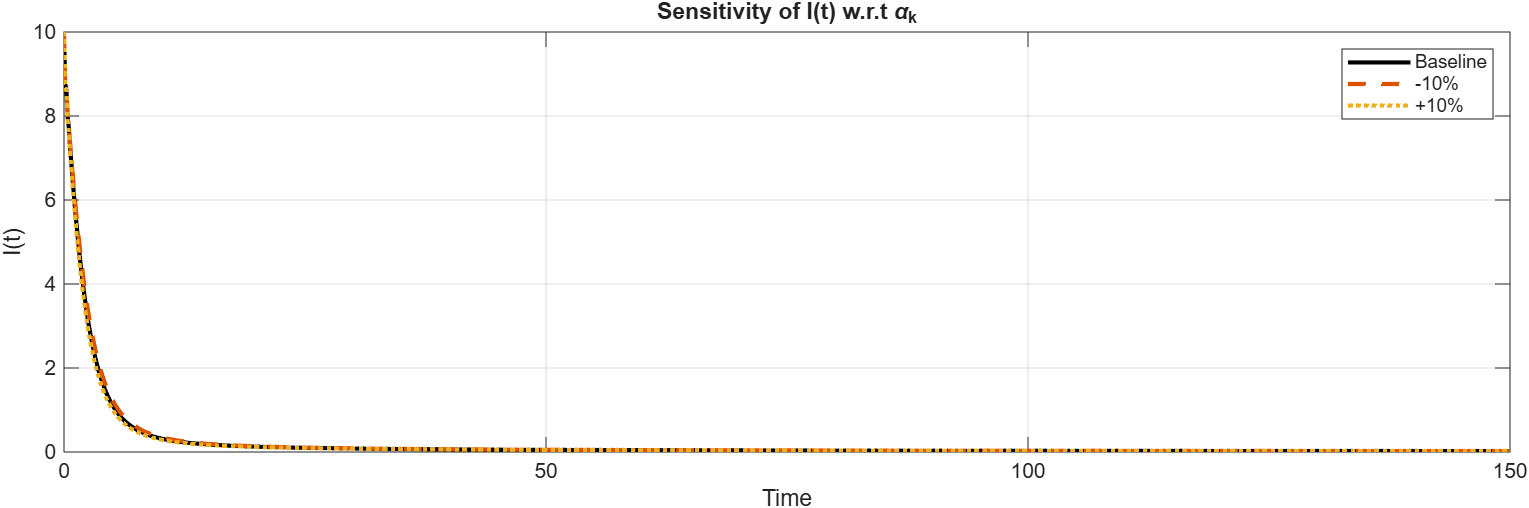}
        \caption{}
        \label{fig:024}
    \end{subfigure}
\begin{subfigure}[height = 20cm]{0.48\textwidth}
        \centering
        \includegraphics[width=\textwidth]{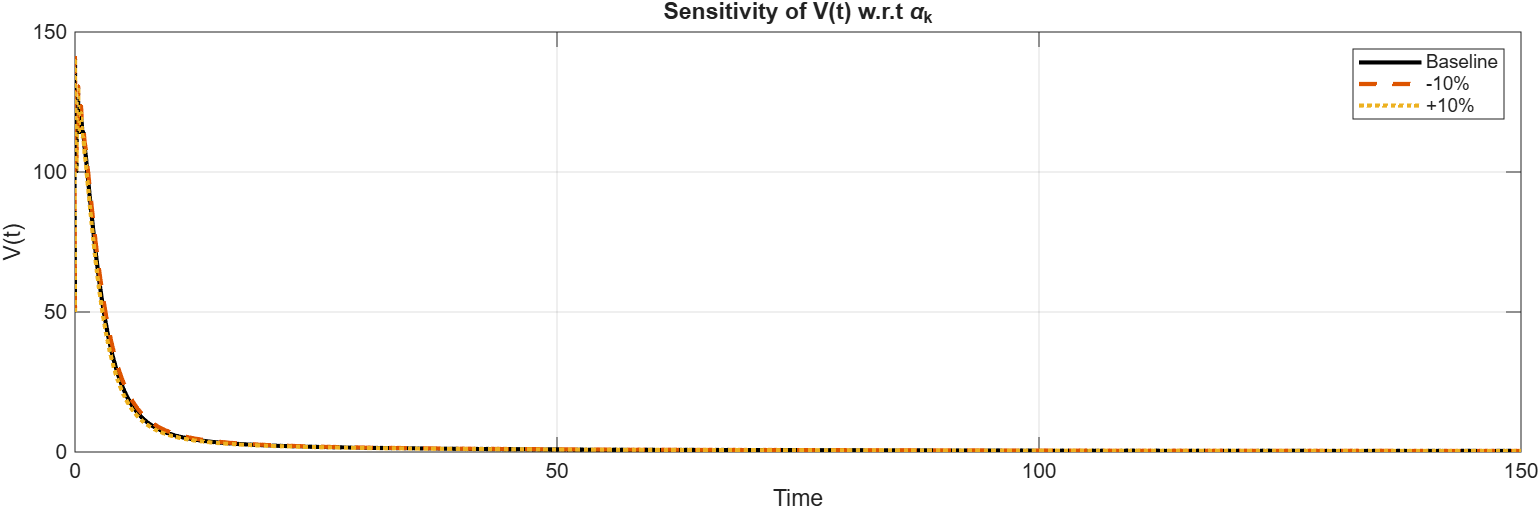}
        \caption{}
        \label{fig:025}
    \end{subfigure}
\begin{subfigure}[height = 20cm]{0.48\textwidth}
        \centering
        \includegraphics[width=\textwidth]{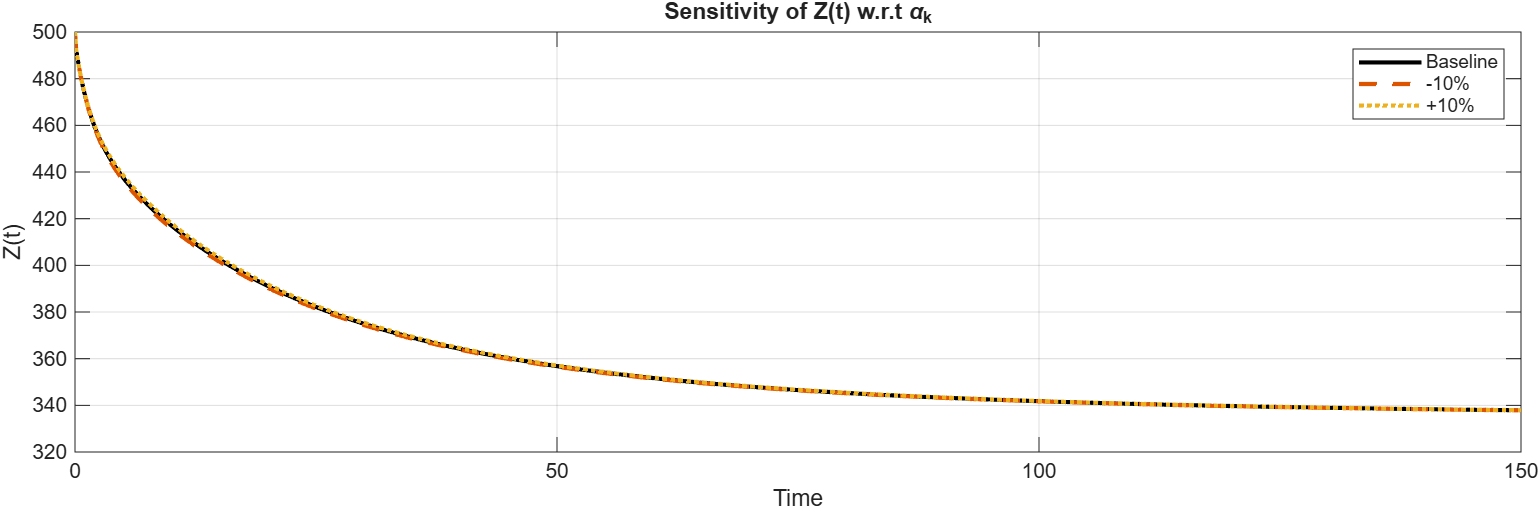}
        \caption{}
        \label{fig:026}
    \end{subfigure}
\begin{subfigure}[height = 20cm]{0.48\textwidth}
        \centering
        \includegraphics[width=\textwidth]{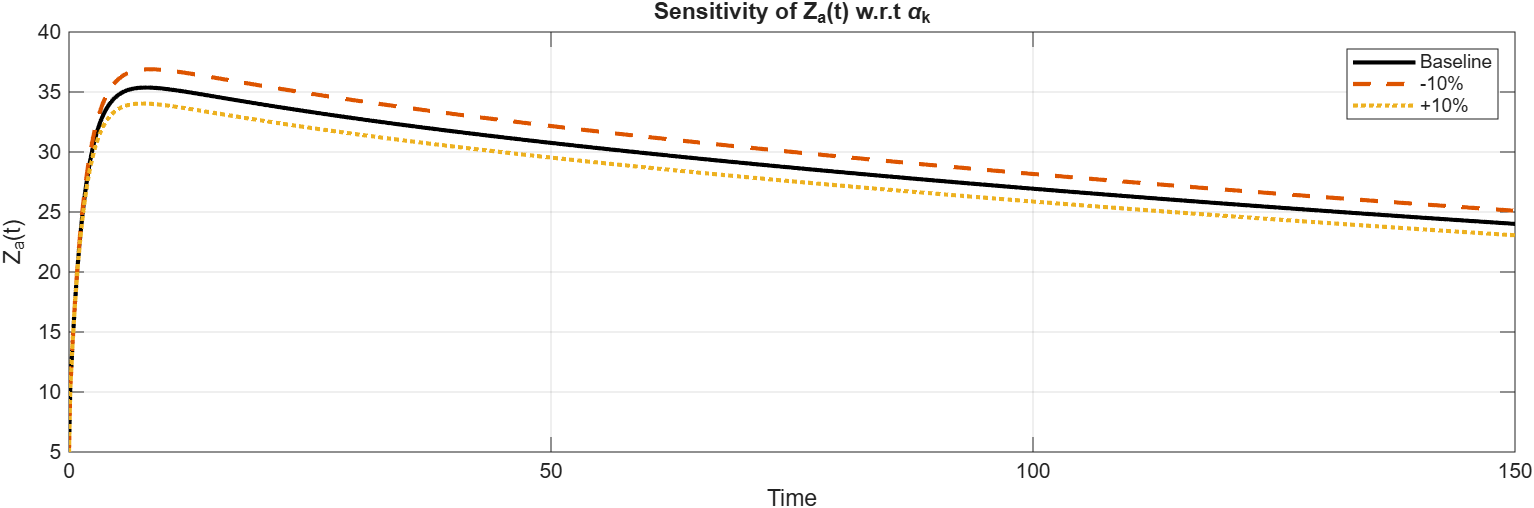}
        \caption{}
        \label{fig:027}
    \end{subfigure}
\caption{Trajectory-based sensitivity analysis of activated CD8$^{+}$ T-cells $Z_{a}(\tilde{t})$ under $\pm 10\%$ parameter perturbations.}
\label{fig:13}
\end{figure}
The given phase portraits (\ref{fig:ch1}--\ref{fig:ch10}) show the two dimensional projection of the fractal-fractional HIV model at $\alpha=0.9$. The curves in the $(T,I)$, $(T, V)$ and $(I, V)$ plots are oscillatory and sawtooth-like, showing that there are sustained nonlinear interactions among free virions, infected cells, and the susceptible CD4${+}$ T-cells. The components of the immune response $(Z,Z_a )$ also exhibit coupled oscillatory behavior, which indicates the activation mechanism based on the interaction terms, as is the case of the interaction terms, $\beta ZI$ and $\alpha_k I Z_a$. The predictions $(V,Z)$ and $(V,Z_a)$ indicate that changes in viral load are closely linked with the changes in the population of immune cells, which proves the feedback nature of the model. In general, the non-closed and oscillatory trajectories indicate the existence of a complex transient dynamics due to the memory effect of the fractional order and the nonlinear interaction among compartments, which indicates the rich dynamics of the proposed fractal-fractional HIV system (\ref{eq:FF_HIV_model}).\\
The three-dimensional phase portraits (\ref{fig:3d1}--\ref{fig:3d3}) depict the intertwined nonlinear behavior of the fractal fractional HIV model with $\alpha = 0.9$. The trajectory in the $(T,I,V)$ space has oscillatory and non-closed behaviors because of the high interplay between the free virions, infected cells, and the susceptible CD4${+}$ T-cells via the infection term $\chi T V$, and the viral production term $\varepsilon_V \mu_I I$. 
The projection of $(I,V,Z_a)$ brings out the feedback loop between the viral load, the activated T-cell response of the $CD8^{+}$, and the increased $V$ leads to the activation of the immune response indirectly through the interaction term $\beta$ $ZI$ and then regulated by the clearance term $\alpha_k$ $I Z_a$. 
In the same vein, the attractor of the immune dynamics on the space of $(Z,Za,I)$ illustrates the coordinated nature of the immune dynamics, and the way in which the processes of activation and natural decays influence the adaptive response. The lack of simple closed orbits and the existence of irregular oscillatory structures imply complex transient dynamics due to nonlinear coupling and memory effect brought about by the fractional order, $\alpha = 0.9$. These findings validate the rich dynamics of the proposed fractal  fractional HIV system.
\section{Visualization Methodology}
The presented plots (\ref{fig:9}--\ref{fig:13}) illustrate the trajectory-based sensitivity analysis of all compartments under parameter perturbations of $\pm 10\%$. The simulations were performed using the fractal--fractional HIV model with the Atangana--Baleanu--Caputo operator incorporating the Mittag--Leffler kernel, and solved numerically via the Atangana--Toufik scheme.
The results demonstrate that variations in key parameters significantly affect both transient and long-term dynamics of the system. In particular, the viral load $V(\tilde{t})$ and infected cells $I(\tilde{t})$ show strong sensitivity to changes in infection and production rates, while the immune compartments $Z(\tilde{t})$ and $Z_{a}(\tilde{t})$ exhibit noticeable responses to activation and clearance parameters. The susceptible CD4$^{+}$ T-cells $T(\tilde{t})$ display comparatively smoother variations, indicating a more stable behavior under perturbations.
Overall, the divergence of trajectories from the baseline solution confirms the strong nonlinear coupling and memory effects inherent in the model, highlighting the critical role of parameter values in shaping the progression and control of HIV dynamics.
\subsection{Sensitivity Heatmap Representation}
To visualize the influence of model parameters on system dynamics, a sensitivity heatmap is constructed. The normalized sensitivity indices of key output variables, particularly the viral load $V(\tilde{t})$, are computed with respect to selected parameters. These indices are evaluated over a range of fractional orders $\alpha$ and parameter values, and displayed as a color-coded matrix. This representation highlights regions of high and low sensitivity, providing a global view of parameter impact.
\begin{figure}[H]
\centering
\includegraphics[width=0.6\textwidth]{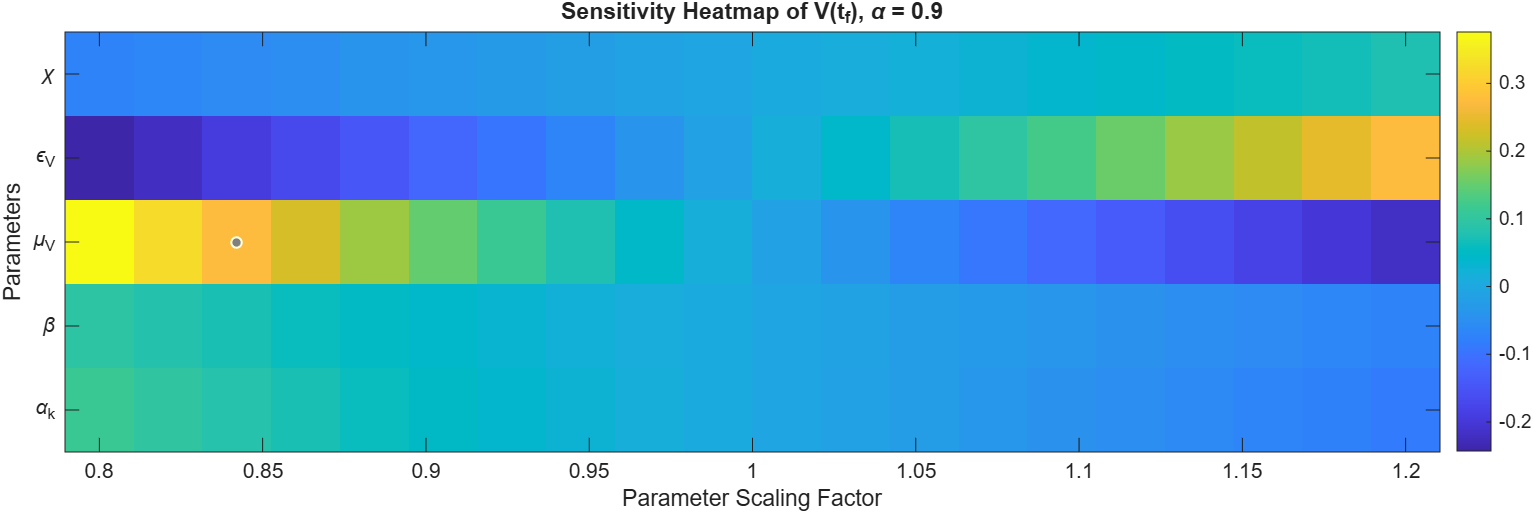}
\caption{Sensitivity heatmap showing the influence of key parameters on the viral load $V(t_f)$.}
\end{figure}

\subsection{Tornado Diagram Analysis}
A tornado diagram is employed to rank the relative importance of parameters affecting the model output. Each parameter is perturbed within a predefined range while keeping others fixed, and the resulting variation in the output (e.g., $V(t_f)$) is recorded. The parameters are then ordered according to their impact magnitude, allowing for clear identification of the most influential factors governing the system dynamics.
\begin{figure}[H]
\centering
\includegraphics[width=0.6\textwidth]{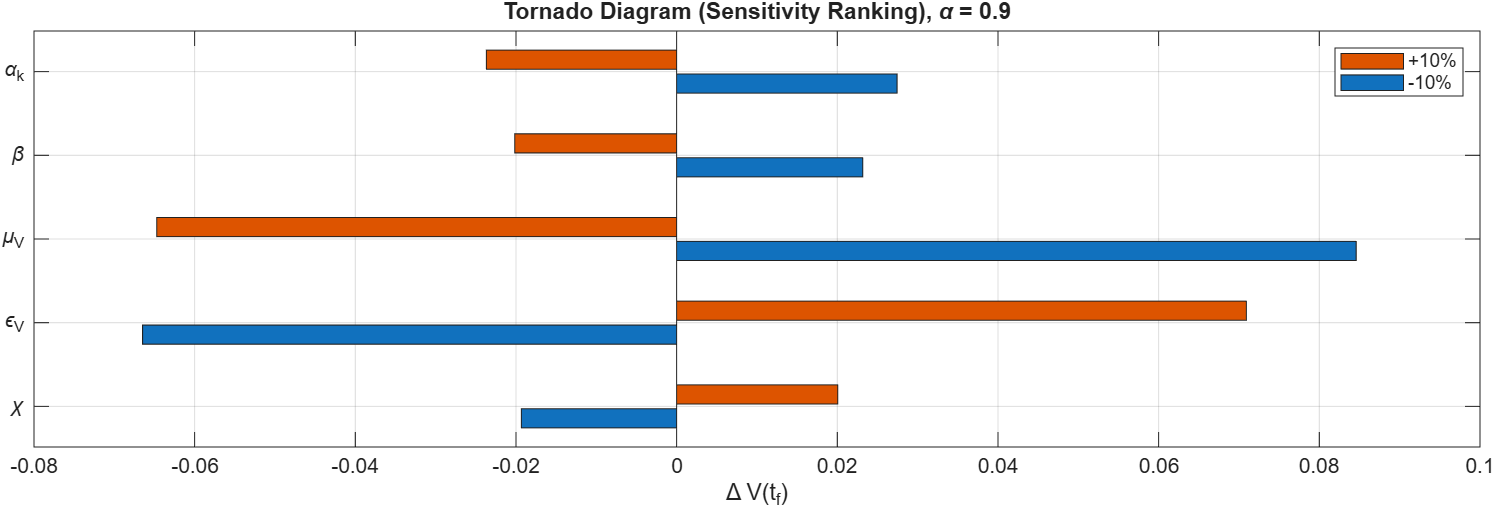}
\caption{Tornado diagram illustrating the relative importance of parameters on the viral load.}
\end{figure}

\section{Conclusion}
In this study, a fractal-fractional HIV model with the Mittag-Leffler kernel was developed using the Atangana--Baleanu--Caputo operator to describe the complex dynamics of HIV transmission. The theoretical analysis confirmed the existence and uniqueness of the solutions, while the Hyers--Ulam stability analysis verified the stability and reliability of the proposed model. Numerical simulations were successfully performed using the Newton polynomial approximation together with the Atangana--Toufik numerical scheme. The obtained results demonstrated that the considered parameters have significant effects on both transient and long-term behaviors of the HIV system. Moreover, visualization tools such as sensitivity heatmaps and tornado diagrams provided deeper insight into the influence of model parameters on disease dynamics. Overall, the proposed fractal--fractional framework offers an effective mathematical and computational approach for studying HIV dynamics and can be extended to other complex biological systems such as Malaria and COVID infections in future investigations.

\section*{Declaration}

\noindent \textbf{Conflict of Interest:} The authors declared that they have no conflict of interest.

\noindent \textbf{Funding:}
The work of S. Noeiaghdam was funded by the High-Level Talent Research Start-up Project Funding of Henan Academy of Sciences (Project No. 241819246). This research was supported by Henan International Studio for Stochastic Mathematical Modeling and Artificial Intelligence in Energy and Battery Systems, No. GZS2026018. 

\noindent \textbf{Author Contributions:} N.S. and S.N. wrote the main manuscript text, edited, software and coding and prepared the results. S.N. supervised the project. All authors reviewed the manuscript.

\noindent \textbf{Data Availability:} Not applicable. 

\noindent \textbf{AI Use:} We declare that we applied AI-tools for English grammar check.

\end{document}